\documentclass{amsart}
\usepackage[mathscr]{eucal}

\usepackage{hyperref}

\allowdisplaybreaks[1]

\usepackage{relsize}

\PassOptionsToPackage{usenames,dvipsnames,svgnames}{xcolor}  
\usepackage{tikz}
\usetikzlibrary{arrows,positioning,automata}
\usetikzlibrary{shapes,snakes} 

\urlstyle{sf}
\newtheorem{thrm}{Theorem}[section]
\newtheorem{lem}[thrm]{Lemma}

\newtheorem{cor}[thrm]{Corollary}
\theoremstyle{definition}
\newtheorem{definition}[thrm]{Definition}

\numberwithin{equation}{section}

\newtheorem{example}[thrm]{Example}
\newtheorem{problem}[thrm]{Problem}

\newcommand{\labeq}[1]{\label{eq:#1}}
\newcommand{\refeq}[1]{(\ref{eq:#1})}
\newcommand{\labt}[1]{\label{thrm:#1}}
\newcommand{\reft}[1]{Theorem~\ref{thrm:#1}}

\newcommand{\labl}[1]{\label{lemma:#1}}
\newcommand{\refl}[1]{Lemma~\ref{lemma:#1}}
\newcommand{\labd}[1]{\label{definition:#1}}
\newcommand{\refd}[1]{Definition~\ref{definition:#1}}

\newcommand{\refc}[1]{Corollary~\ref{coro:#1}}

\newcommand{\labf}[1]{\label{fig:#1}}
\newcommand{\reff}[1]{Figure~\ref{fig:#1}}

\newcommand{\dimm}[1]{\hbox{$\dim_{\hbox{M}}$}\left( #1\right)}
\newcommand{\dimh}[1]{\hbox{$\dim_{\hbox{H}}$}\left( #1\right)}

\newcommand{\lmeas}[1]{\lambda\left( #1 \right)}

\newcommand{\NN}{\mathbb{N}_2^{\mathbb{N}}}

\newcommand{\wrt}[1]{\hbox{ w.r.t. }#1}
\newcommand{\wrtQ}{\hbox{ w.r.t. }Q}

\newcommand{\floor}[1]{\left\lfloor #1 \right\rfloor} 
\newcommand{\ceil}[1]{\left\lceil #1 \right\rceil} 
\newcommand{\br}[1]{\left\{ #1 \right\}}

\newcommand{\pr}[1]{\left( #1 \right)}

\newcommand{\NQ}{\mathscr{N}(Q)}
\newcommand{\N}[1]{\mathscr{N}( #1 )}
\newcommand{\Nk}[2]{\mathscr{N}_{#2}( #1 )} 
\newcommand{\DNQ}{\mathscr{DN}(Q)}
\newcommand{\DN}[1]{\mathscr{DN}( #1 )} 
\newcommand{\RNQ}{\mathscr{RN}(Q)}
\newcommand{\RN}[1]{\mathscr{RN}( #1 )} 
\newcommand{\RNk}[2]{\mathscr{RN}_{#2}( #1 )} 
\newcommand{\RDN}{\RNQ \cap \DNQ \backslash \NQ}

\newcommand{\qnk}{Q_n^{(k)}}

\newcommand{\pnk}{P_n^{(k)}}

\newcommand{\SQx}{\mathscr{S}_Q(x)}
\newcommand{\WQS}{\mathscr{W}_Q(S)}

\newcommand{\blank}[1]{ }

\thanks{Research of the authors is partially supported by the U.S. NSF grant DMS-0943870. }

\author[D. Airey]{Dylan Airey}
\address[D. Airey]{
Department of Mathematics, University of Texas at Austin, 2515 Speedway, Austin, TX 78712-1202, USA}
%\indent Fax: 512-471-9038}
\email{dylan.airey@utexas.edu}

\author[B. Mance]{Bill Mance}
\address[B. Mance]{Department of Mathematics, University of North Texas, General Academics Building 435, 1155 Union Circle,  \#311430, Denton, TX 76203-5017, USA}
%\indent Tel.: +1-940-369-7374\newline
%\indent Fax: +1-940-565-4805}
\email{mance@unt.edu}

%\keywords{Cantor series, Normal numbers, Uniformly distributed sequences}
%\subjclass{Primary 11K16, Secondary 11A63}
%% NB There should be only one primary classification, and zero or
%more secondary classifications.
\begin{document}

\title[Hausdorff dimension of digit sets]{On the Hausdorff dimension of some sets of numbers defined through the digits of their $Q$-Cantor series expansions}

\begin{abstract}
Following in the footsteps of P. Erd\H{o}s, A. R\'{e}nyi, and T. \u{S}al\'{a}t we compute the Hausdorff dimension of sets of numbers whose digits with respect to their $Q$-Cantor series expansions satisfy various statistical properties.  In particular, we consider difference sets associated with various notions of normality and sets of numbers with a prescribed range of digits.
\end{abstract}

\maketitle

\section{Introduction}

The study of normal numbers and other statistical properties of real numbers with respect to large classes of Cantor series expansions was  first done by P. Erd\H{o}s and A. R\'{e}nyi in \cite{ErdosRenyiConvergent} and \cite{ErdosRenyiFurther} and by A. R\'{e}nyi in \cite{RenyiProbability}, \cite{Renyi}, and \cite{RenyiSurvey} and by P. Tur\'{a}n in \cite{Turan}.

%\cite{Renyi,RenyiSurvey,ErdosRenyiConvergent,ErdosRenyiFurther,RenyiProbability}
The $Q$-Cantor series expansions, first studied by G. Cantor in \cite{Cantor},
are a natural generalization of the $b$-ary expansions.\footnote{G. Cantor's motivation to study the Cantor series expansions was to extend the well known proof of the irrationality of the number $e=\sum 1/n!$ to a larger class of numbers.  Results along these lines may be found in the monograph of J. Galambos \cite{Galambos}. } %   See also \cite{TijdemanYuan} and \cite{HT}.  }
Let $\mathbb{N}_k:=\mathbb{Z} \cap [k,\infty)$.  If $Q \in \NN$, then we say that $Q$ is a {\it basic sequence}.
% if each $q_n$ is an integer greater than or equal to $2$.
Given a basic sequence $Q=(q_n)_{n=1}^{\infty}$, the {\it $Q$-Cantor series expansion} of a real number $x$  is the (unique)\footnote{Uniqueness can be proven in the same way as for the $b$-ary expansions.} expansion of the form
\begin{equation} \labeq{cseries}
x=E_0+\sum_{n=1}^{\infty} \frac {E_n} {q_1 q_2 \cdots q_n}
\end{equation}
where $E_0=\floor{x}$ and $E_n$ is in $\{0,1,\ldots,q_n-1\}$ for $n\geq 1$ with $E_n \neq q_n-1$ infinitely often. We abbreviate \refeq{cseries} with the notation $x=E_0.E_1E_2E_3\ldots$ w.r.t. $Q$.

%Clearly, the $b$-ary expansion is a special case of \refeq{cseries} where $q_n=b$ for all $n$.  If one thinks of a $b$-ary expansion as representing an outcome of repeatedly rolling a fair $b$-sided die, then a $Q$-Cantor series expansion may be thought of as representing an outcome of rolling a fair $q_1$ sided die, followed by a fair $q_2$ sided die and so on.

%We refer to the directed graph in \reff{figure1} for the complete containment relationships between these notions when $Q$ is infinite in limit and fully divergent.  The vertices are labeled with all possible intersections of one, two, or three choices of the sets $\NQ$, $\RNQ$, and $\DNQ$.  The set labeled on vertex $A$ is a subset of the set labeled on vertex $B$ if and only if there is a directed path from $A$ to $B$.\footnote{The underlying undirected graph in \reff{figure1} has an isomorphic copy of complete bipartite graph $K_{3,3}$ as a subgraph.  Thus, it is not planar and the analogous directed graph that connects two vertices if and only if there is a containment relation between the two labels is more difficult to read.}  For example, $\NQ \cap \DNQ \subseteq \RNQ$, so all numbers that are $Q$-normal and $Q$-distribution normal are also $Q$-ratio normal.  
A {\it block} is an ordered tuple of non-negative integers, a {\it block of length $k$} is an ordered $k$-tuple of integers, and {\it block of length $k$ in base $b$} is an ordered $k$-tuple of integers in $\{0,1,\ldots,b-1\}$.

Let
$$
Q_n^{(k)}:=\sum_{j=1}^n \frac {1} {q_j q_{j+1} \cdots q_{j+k-1}} \hbox{ and }  T_{Q,n}(x):=\left(\prod_{j=1}^n q_j\right) x \pmod{1}.
$$
A. R\'enyi \cite{Renyi} defined a real number $x$ to be {\it normal} with respect to $Q$ if for all blocks $B$ of length $1$,
\begin{equation}\labeq{rnormal}
\lim_{n \rightarrow \infty} \frac {N_n^Q (B,x)} {Q_n^{(1)}}=1.
\end{equation}
If $q_n=b$ for all $n$ and we restrict $B$ to consist of only digits less than $b$, then \refeq{rnormal} is equivalent to {\it simple normality in base $b$}, but not equivalent to {\it normality in base $b$}. 
A basic sequence $Q$ is {\it $k$-divergent} if
$\lim_{n \rightarrow \infty} Q_n^{(k)}=\infty$,  {\it fully divergent} if $Q$ is $k$-divergent for all $k$, and {\it $k$-convergent} if it is not $k$-divergent.  
A basic sequence $Q$ is {\it infinite in limit} if $q_n \rightarrow \infty$.

\begin{definition}\labd{1.7} A real number $x$  is {\it $Q$-normal of order $k$} if for all blocks $B$ of length $k$,
$$
\lim_{n \rightarrow \infty} \frac {N_n^Q (B,x)} {Q_n^{(k)}}=1.
$$
We let $\Nk{Q}{k}$ be the set of numbers that are $Q$-normal of order $k$.  The real number $x$ is {\it $Q$-normal} if
$x \in \NQ := \bigcap_{k=1}^{\infty} \Nk{Q}{k}.$
%Additionally, $x$ is {\it simply $Q$-normal} if it is $Q$-normal of order $1$.  
$x$ is {\it $Q$-ratio normal of order $k$} (here we write $x \in \RNk{Q}{k}$) if for all blocks $B_1$ and $B_2$ of length $k$
$$
\lim_{n \to \infty} \frac {N_n^Q (B_1,x)} {N_n^Q (B_2,x)}=1.
$$
$x$ is {\it $Q$-ratio normal} if
$
x \in \RNQ := \bigcap_{k=1}^{\infty} \RNk{Q}{k}.
$
A real number~$x$ is {\it $Q$-distribution normal} if
the sequence $(T_{Q,n}(x))_{n=0}^\infty$ is uniformly distributed mod $1$.  Let $\DNQ$ be the set of $Q$-distribution normal numbers.
\end{definition}

It was proven in \cite{ppq1} that the directed graph in \reff{figure1} gives the complete containment relationships between these notions when $Q$ is infinite in limit and fully divergent.  The vertices are labeled with all possible intersections of one, two, or three choices of the sets $\NQ$, $\RNQ$, and $\DNQ$.  The set labeled on vertex $A$ is a subset of the set labeled on vertex $B$ if and only if there is a directed path from $A$ to $B$. 
%\footnote{The underlying undirected graph in \reff{figure1} has an isomorphic copy of complete bipartite graph $K_{3,3}$ as a subgraph.  Thus, it is not planar and the analogous directed graph that connects two vertices if and only if there is a containment relation between the two labels is more difficult to read.}  
For example, $\NQ \cap \DNQ \subseteq \RNQ$, so all numbers that are $Q$-normal and $Q$-distribution normal are also $Q$-ratio normal.  
%A {\it block} is an ordered tuple of non-negative integers, a {\it block of length $k$} is an ordered $k$-tuple of integers, and {\it block of length $k$ in base $b$} is an ordered $k$-tuple of integers in $\{0,1,\ldots,b-1\}$.

Note that in base~$b$, where $q_n=b$ for all $n$,
 the corresponding notions of $Q$-normality, $Q$-ratio normality, and $Q$-distribution normality are equivalent. This equivalence
is fundamental in the study of normality in base $b$.

\begin{figure}
\caption{}
\labf{figure1}
\begin{tikzpicture}[>=stealth',shorten >=1pt,node distance=3.8cm,on grid,initial/.style    ={}]
%\centering
  \node[state]          (NQ)                        {$\mathsmaller{\NQ}$};
  \node[state]          (RNQ) [left =of NQ]    {$\mathsmaller{\RNQ}$};
  \node[state]          (NQRNQ) [above right =of NQ]    {$\mathsmaller{\NQ \cap \RNQ}$};
  \node[state]          (RNQDNQ) [above left=of RNQ]    {$\mathsmaller{\RNQ \cap \DNQ}$};
  \node[state]          (NQDNQ) [above left =of NQ]    {$\mathsmaller{\NQ \cap \DNQ}$};
  \node[state]          (NQRNQDNQ) [above right =of NQDNQ]    {$\mathsmaller{\NQ \cap \RNQ \cap \DNQ}$};
  \node[state]          (DNQ) [above right=of RNQDNQ]    {$\mathsmaller{\DNQ}$};
\tikzset{mystyle/.style={->,double=black}} 
\tikzset{every node/.style={fill=white}} 
\path (RNQDNQ)     edge [mystyle]    (RNQ)
      (RNQDNQ)     edge [mystyle]     (DNQ)
      (NQ)     edge [mystyle]     (RNQ)
      (NQDNQ)     edge [mystyle]     (RNQDNQ)
      (NQDNQ)     edge [mystyle]     (NQ);
\tikzset{mystyle/.style={<->,double=black}}
\path (NQRNQDNQ)     edge [mystyle]    (NQDNQ)
	(NQ)     edge [mystyle]    (NQRNQ);
\end{tikzpicture}
\end{figure}

It  follows from a well known result of H. Weyl \cite{Weyl2,Weyl4} that $\DNQ$ is a set of full Lebesgue measure for every basic sequence $Q$. We will need the following result of the second author \cite{Mance4} later in this paper.

\begin{thrm}\labt{measure}\footnote{Early work in this direction has been done by A. R\'enyi \cite{Renyi}, T.  \u{S}al\'at \cite{Salat4}, and F. Schweiger~\cite{SchweigerCantor}.}
Suppose that $Q$ that is infinite in limit.  Then $\Nk{Q}{k}$ and $\RNk{Q}{k}$ are of full measure if and only if $Q$ is $k$-divergent. The sets $\NQ$ and $\RNQ$ are of full measure if and only if $Q$ is fully divergent. 
\end{thrm}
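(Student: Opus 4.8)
The plan is to reduce both statements to a single strong law of large numbers (SLLN) for the digit-counting functions and then read off the measure-theoretic conclusions. The essential probabilistic input is the standard fact that, under Lebesgue measure $\lambda$ on $[0,1)$, the digits $E_1,E_2,\ldots$ of the $Q$-Cantor series expansion are independent random variables with $E_n$ uniformly distributed on $\br{0,1,\ldots,q_n-1}$. Fixing a block $B=(b_1,\ldots,b_k)$, let $X_j=X_j(B)$ be the indicator that $B$ occurs starting at position $j$, so that $N_n^Q(B,x)=\sum_{j=1}^{n}X_j(x)$. By independence and uniformity, $\mathbb{E}X_j=\pr{q_j q_{j+1}\cdots q_{j+k-1}}^{-1}$ whenever the window is admissible (each $b_i\le q_{j+i-1}-1$), and $\mathbb{E}X_j=0$ otherwise. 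Because $Q$ is infinite in limit, only finitely many windows are inadmissible for a given $B$, so $\mathbb{E}\,N_n^Q(B,\cdot)=\qnk-c_B$ for a finite constant $c_B$; in particular $\mathbb{E}\,N_n^Q(B,\cdot)/\qnk\to1$ once $\qnk\to\infty$. Thus it suffices to prove: if $Q$ is $k$-divergent then $N_n^Q(B,x)/\qnk\to1$ for $\lambda$-a.e.\ $x$ and every block $B$. Since there are only countably many blocks of length $k$, intersecting the corresponding full-measure sets shows $\Nk{Q}{k}$ is of full measure; and since $x\in\Nk{Q}{k}$ forces $N_n^Q(B_1,x)/N_n^Q(B_2,x)\to1$ for all $B_1,B_2$, we get $\Nk{Q}{k}\subseteq\RNk{Q}{k}$, so $\RNk{Q}{k}$ is full as well.

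For the SLLN I would use the second-moment method, the only complication being that the $X_j(B)$ are not independent but merely $k$-dependent: $X_j$ and $X_{j'}$ are independent as soon as $\abs{j-j'}\ge k$, while overlapping windows contribute at most $(k-1)$ covariance terms per index. Using $\mathbb{E}[X_jX_{j'}]\le\min\pr{\mathbb{E}X_j,\mathbb{E}X_{j'}}$ one obtains a bound of the form $\mathrm{Var}\pr{N_n^Q(B,\cdot)}\le C_k\,\qnk$ with $C_k$ depending only on $k$. Chebyshev's inequality then gives convergence in measure, but full measure requires almost-everywhere convergence, and upgrading to a.e.\ convergence is the main obstacle. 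I would handle it by the usual subsequence-and-sandwich device: choose indices $n_m$ with $Q_{n_m}^{(k)}\in[m^2,m^2+1]$—possible because $\qnk\to\infty$ and its increments $\pr{q_n\cdots q_{n+k-1}}^{-1}$ tend to $0$—so that $\sum_m \mathrm{Var}\pr{N_{n_m}^Q(B,\cdot)}/\pr{Q_{n_m}^{(k)}}^2\le C_k\sum_m m^{-2}<\infty$. Borel--Cantelli yields $N_{n_m}^Q(B,x)/Q_{n_m}^{(k)}\to1$ a.e.; since $n\mapsto N_n^Q(B,x)$ is nondecreasing and $Q_{n_{m+1}}^{(k)}/Q_{n_m}^{(k)}\to1$, sandwiching $N_{n_m}^Q\le N_n^Q\le N_{n_{m+1}}^Q$ for $n_m\le n\le n_{m+1}$ gives the full limit $N_n^Q(B,x)/\qnk\to1$ a.e.

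For the converse, suppose $Q$ is not $k$-divergent, so $\qnk\uparrow a:=\sum_{j\ge1}\pr{q_j\cdots q_{j+k-1}}^{-1}<\infty$. Then $\sum_j\mathbb{E}X_j(B)\le a$ for every $B$, so the first Borel--Cantelli lemma shows that for a.e.\ $x$ the count $N_n^Q(B,x)$ stabilizes at a finite integer $N_\infty(B,x)$, whence $N_n^Q(B,x)/\qnk\to N_\infty(B,x)/a$. Taking $B_0=(0,\ldots,0)$ (admissible at every position since $q_n\ge2$), the events $A_j^c=\br{B_0\text{ does not start at }j}$ are increasing in the digit coordinates, so by the Harris--FKG correlation inequality for the product measure we have $\lambda\pr{N_\infty(B_0,\cdot)=0}\ge\prod_{j\ge1}\pr{1-\mathbb{E}X_j(B_0)}>0$, the product being positive because $\sum_j\mathbb{E}X_j(B_0)<\infty$. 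On this positive-measure set the relevant ratio tends to $0\ne1$, so neither $\Nk{Q}{k}$ nor (by the same witness) $\RNk{Q}{k}$ can be of full measure.

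Finally, the statements for $\NQ$ and $\RNQ$ follow formally. Since $\NQ=\bigcap_{k\ge1}\Nk{Q}{k}$ is a countable intersection, it is of full measure whenever every $\Nk{Q}{k}$ is, i.e.\ whenever $Q$ is $k$-divergent for all $k$, which is exactly full divergence; conversely, if $Q$ fails to be $k_0$-divergent for some $k_0$ then $\NQ\subseteq\Nk{Q}{k_0}$ is not full. The identical argument applies to $\RNQ=\bigcap_{k\ge1}\RNk{Q}{k}$. I expect the variance estimate under $k$-dependence together with the a.e.\ upgrade of the second paragraph to be the technical heart of the argument; the converse and the passage to $\NQ,\RNQ$ are comparatively soft.
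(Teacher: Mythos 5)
The paper does not prove this theorem at all: it is imported verbatim from \cite{Mance4} (``We will need the following result of the second author''), so there is no in-paper argument to compare yours against. Judged on its own, your proposal is a correct and essentially complete proof along the classical R\'enyi lines: the digits are indeed independent and uniform under Lebesgue measure, the occurrence indicators are $k$-dependent, the bound $\mathrm{Var}\pr{N_n^Q(B,\cdot)}\leq C_k Q_n^{(k)}$ is right, and the subsequence-plus-monotonicity device (choosing $n_m$ with $Q_{n_m}^{(k)}\approx m^2$, then sandwiching using that $N_n^Q(B,x)$ and $Q_n^{(k)}$ are nondecreasing) is the standard way to upgrade Chebyshev to a.e.\ convergence here. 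Two points are worth making explicit if you write this up. First, in the $k$-convergent case the Harris--FKG step is genuinely load-bearing: the windows are not independent, so the naive product bound for ``$B_0$ never occurs'' needs the observation, which you make correctly, that the events ``$B_0$ does not start at position $j$'' are up-sets for the product order, whence $\lambda\pr{N_\infty(B_0,\cdot)=0}\geq\prod_j\pr{1-\mathbb{E}X_j(B_0)}>0$ because $\sum_j\mathbb{E}X_j(B_0)<\infty$ and each factor is at least $1-2^{-k}$. Second, ``by the same witness'' for $\RNk{Q}{k}$ deserves one more line: on that positive-measure set take $B_1=(E_1,\ldots,E_k)$, the block formed by the first $k$ digits of $x$, which occurs at least once, so that $N_n^Q(B_0,x)/N_n^Q(B_1,x)=0\neq 1$ for all $n$ and $x\notin\RNk{Q}{k}$. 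With those two remarks spelled out, the argument is complete, and the reduction of the $\NQ$, $\RNQ$ statements to the order-$k$ statements via countable intersections is exactly as soft as you say.
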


Based on \reff{figure1} and \reft{measure} it is natural to ask for the Hausdorff dimension of the difference sets.  
It was proven in \cite{Mance7} that for every basic sequence $Q$ that is infinite in limit
$$
\dimh{\DNQ \backslash \NQ}=\dimh{\DNQ \backslash \RNQ}=1.
$$
Using different methods we will prove the following theorem.
\begin{thrm}
Every non-empty difference set expressed in terms of $\NQ$, $\RNQ$, and $\DNQ$, possibly involving intersections and unions, has full Hausdorff dimension for every $Q$ that is infinite in limit, except for the set $\NQ \backslash \DNQ$, 
\end{thrm}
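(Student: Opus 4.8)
The plan is to reduce the statement to a finite Hausdorff dimension computation for the atoms of the Boolean algebra generated by $\NQ$, $\RNQ$, $\DNQ$, and then to construct full-dimensional Cantor subsets inside each relevant atom.

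\textbf{Reduction to atoms.} Using the single nontrivial containment $\NQ \subseteq \RNQ$ that can be read off from \reff{figure1}, the three sets cut $\RNQ \cup \DNQ$ into exactly five regions: $\NQ \cap \DNQ$, $\NQ \backslash \DNQ$, $(\RNQ \cap \DNQ)\backslash \NQ$, $\RNQ \backslash (\NQ \cup \DNQ)$, and $\DNQ \backslash \RNQ$. Any set built from $\NQ$, $\RNQ$, $\DNQ$ using only $\cap$ and $\cup$ is monotone, and the region $\NQ \cap \DNQ$ is contained in each of the three sets; hence every non-empty such combination contains $\NQ \cap \DNQ$. Consequently, in a difference $X \backslash Y$ with $X,Y$ formed by intersections and unions, the subtrahend $Y$ always removes the region $\NQ \cap \DNQ$, so every difference set is a union of the four remaining regions. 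Since Hausdorff dimension is countably stable, $\dimh{\bigcup_i A_i}=\sup_i \dimh{A_i}$, a difference set attains full dimension as soon as it contains one of the three regions $(\RNQ \cap \DNQ)\backslash \NQ$, $\RNQ \backslash(\NQ \cup \DNQ)$, or $\DNQ \backslash \RNQ$. The unique non-empty difference set avoiding all three is the lone region $\NQ \backslash \DNQ$ (realized, e.g., as $\NQ \backslash \DNQ$ itself). Thus it suffices to prove that those three regions each have Hausdorff dimension $1$; as subsets of the line their dimension is at most $1$, so only lower bounds are needed.

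\textbf{The region $\DNQ \backslash \RNQ$ and the remaining two.} The region $\DNQ \backslash \RNQ$ is exactly the set for which $\dimh{\DNQ \backslash \RNQ}=1$ is proved in \cite{Mance7}, so it is already handled. For $(\RNQ \cap \DNQ)\backslash \NQ$ and $\RNQ \backslash(\NQ \cup \DNQ)$ I would, for each $\e>0$, build a Cantor subset $C\subseteq[0,1)$ of dimension at least $1-\e$ all of whose points have the prescribed normality type. First I split the digit positions $\mathbb{N}$ into a ``statistics'' set and a ``freedom'' set. On the statistics set I prescribe digits so that, for each length $k$, all blocks of length $k$ occur with the same asymptotic frequency (forcing ratio normality) while the common frequency stays bounded away from the normal target $Q_n^{(k)}$ (forcing failure of normality); non-emptiness of these regions in \reff{figure1} guarantees such profiles are admissible. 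The freedom set supplies dimension. Distribution normality is then controlled through the tails $T_{Q,n}(x)=\sum_{m>n}E_m/(q_{n+1}\cdots q_m)$: for $(\RNQ \cap \DNQ)\backslash \NQ$ I arrange the freedom digits so these tails equidistribute (equidistribution being generic, consistent with $\DNQ$ having full measure), whereas for $\RNQ \backslash(\NQ \cup \DNQ)$ I force a positive density of indices $n$ at which $T_{Q,n}(x)$ is trapped in a fixed subinterval, destroying uniform distribution while leaving the block frequencies intact.

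\textbf{Main obstacle.} The delicate point is reconciling the frequency constraints with the dimension bound when $q_n\to\infty$ slowly (say $q_n \asymp n$), because then no sparse set of positions carries a positive proportion of the logarithmic mass $\sum_{j\le n}\log q_j$, so the freedom set cannot simply be taken thin; it must have positive density, and then its digits a priori disturb the block frequencies. The resolution, which is the technical heart of the argument, is to distribute the freedom over consecutive blocks of positions and, within each block, to permit only those digit configurations that realize one fixed, balanced block-frequency profile. Every such configuration yields the same asymptotic block-frequency ratios (so ratio normality and the controlled deviation from normality survive all choices), yet the number of configurations of a balanced multiset is multinomially large, so the count of admissible digit strings of length $n$ grows like $\exp\!\big((1-\e)\sum_{j\le n}\log q_j\big)$. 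A mass-distribution (Billingsley) lower bound then gives $\dimh{C}\ge 1-\e$. The part I expect to require the most care is verifying that this combinatorial freedom is simultaneously compatible with prescribing the block-frequency ratios for \emph{all} orders $k$ (permuting digits perturbs higher-order blocks) and with the desired distribution behavior of the tails $T_{Q,n}(x)$.
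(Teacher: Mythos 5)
Your reduction to the five atoms of the Boolean algebra generated by $\NQ$, $\RNQ$, $\DNQ$ (using $\NQ\subseteq\RNQ$ from \reff{figure1}) is correct and matches how the paper organizes the problem, and quoting \cite{Mance7} for $\dimh{\DNQ\setminus\RNQ}=1$ is legitimate (the paper reproves a stronger statement in \reft{DNRN}). The gap is in the two remaining atoms, $(\RNQ\cap\DNQ)\setminus\NQ$ and $\RNQ\setminus(\NQ\cup\DNQ)$: what you offer there is a programme rather than a proof, and the obstacle you correctly isolate is not actually resolved by the ``balanced profile'' device. If the freedom positions carry essentially all of the logarithmic mass $\sum_{j\le n}\log q_j$ and their digits realize a balanced profile over the full alphabet $\{0,\dots,q_j-1\}$, then their contribution to $N_n^Q(B,x)$ for a fixed block $B$ of length $k$ is asymptotic to the part of $Q_n^{(k)}$ supported on those positions, i.e.\ essentially to $Q_n^{(k)}$ itself; to destroy normality you must then arrange for the sparse ``statistics'' positions to contribute a count that \emph{dominates} $Q_n^{(k)}$, and nothing in your sketch produces such a count (appealing to non-emptiness of the regions in \reff{figure1} does not supply a digit profile, let alone one compatible with full dimension). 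You also leave unverified the simultaneous realization of balanced profiles for all orders $k$ together with the prescribed behavior of the tails $T_{Q,n}(x)$, and you flag this yourself as the part requiring the most care.

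The idea you are missing, which is the engine of the paper's proof of \reft{RDN}, is to exploit $q_n\to\infty$ to \emph{decouple} the dimension-carrying positions from the statistics-carrying positions: on the full-density freedom positions one restricts $E_n$ to $[x_nq_n-\omega_n,\,x_nq_n+\omega_n)\cap[\ceil{\log i(n)},\,q_n-1]$ with $\omega_n=q_n^{1-\epsilon_i}$ and $\epsilon_i\to0$. This window still has logarithmic length $(1-\epsilon_i)\log q_n$, so the resulting homogeneous Moran set has dimension $1$ by the Feng--Wen--Wu bound, while its left endpoint $\ceil{\log i(n)}\to\infty$ forces every fixed block to occur only finitely often at these positions. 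All block counts are therefore carried by a density-zero set $S$ of positions on which one copies the digits of a number $\xi$ normal with respect to a slowly growing auxiliary basic sequence $P$ with $p_i=\floor{\log i}+2$; then $N_n^Q(B,x)=\sum_i N_{i-k+1}^P(B,\xi)+O(1)$, which yields ratio normality, while $Q_n^{(k)}\big/\sum_i P_{i-k+1}^{(k)}\to 0$ kills normality, and $|E_n/q_n-x_n|\le q_n^{-\epsilon_i}\to0$ combined with \refl{DiscKuN} grants (or, for a non--uniformly distributed choice of $(x_n)$, denies) distribution normality. Without this or an equivalent decoupling mechanism, your construction does not close.
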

It will be shown that the set $\NQ \backslash \DNQ$ has full Hausdorff dimension for a more restricted class of basic sequences in \reft{NDN}.  We should note that we can not hope to establish $\dimh{\NQ \backslash \DNQ}=1$ for all $Q$ that are infinite in limit.  This follows from the result in \cite{Mance4} that $\NQ=\emptyset$ when $Q$ is infinite in limit and not fully divergent.

\iffalse
 $\NQ \backslash \DNQ$, $\RNQ \backslash \NQ$, $\RNQ \backslash \DNQ$, $\DNQ \backslash \NQ$, and $\DNQ \backslash \RNQ$.  
We will define classes of basic sequences $\mathscr{A}$, $\mathscr{B}$, and $\mathscr{C}$.  We will prove in \reft{NDN} and \refc{RNDN} that if $Q$ is in $\mathscr{A}$, then
$$
\dimh{\NQ \backslash \DNQ}=\dimh{\RNQ \backslash \DNQ}=1.
$$
In \reft{RNN}, we will show that for $Q$ in $\mathscr{B}$
$$
\dimh{\RNQ \backslash \NQ}=1.
$$
Lastly, we show in \reft{DNRN} and \refc{DNN} that whenever $Q$ is in $\mathscr{C}$, we have
$$
\dimh{\DNQ \backslash \RNQ}=\dimh{\DNQ \backslash \NQ}=1.
$$
We will actually prove slightly stronger results than these, but only state the above right now for simplicity.
\fi

A surprising property of $Q$-normality of order $k$ is that we may not conclude that $\Nk{Q}{k} \subseteq \Nk{Q}{j}$ for all $j <k$ like we may for the $b$-ary expansions.  In fact, it was shown in \cite{ppq2} that for every $k$ there exists a basic sequence $Q$ and a real number $x$ such that $\Nk{Q}{k} \backslash \bigcup_{j=1}^{k-1} \Nk{Q}{j}$ is non-empty.  
Thus, we will have to be more careful in stating exactly what our theorems prove since lack of $Q$-normality of order $2$ does not imply lack of $Q$-normality of order $338$, for example.  Furthermore, we will greatly expand on this result in \reft{NDNk} where for each natural number $\ell$ we exhibit a class of basic sequences such that
$$
\dimh{\bigcap_{j=\ell}^\infty \Nk{Q}{j} \Big \backslash \bigcup_{j=1}^{\ell-1} \Nk{Q}{j}}=1.
$$
For $x=E_0.E_1E_2\cdots\wrt{Q}$, define the set
$$
\SQx=\{E_1,E_2,E_3,\cdots\}.
$$
P. Erd\H{o}s  and A. R\'{e}nyi \cite{ErdosRenyiConvergent} proved the following theorems.

\begin{thrm}[P. Erd\H{o}s  and A. R\'{e}nyi]\labt{erdosrenyidensity}
If $Q$ is $1$-convergent, then $\SQx$ has density $0$ for almost every real number $x$.
\end{thrm}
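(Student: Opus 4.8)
The plan is to exploit the fact that, under Lebesgue measure on $[0,1)$, the digits $E_1, E_2, \ldots$ of the $Q$-Cantor series expansion are independent random variables with $E_n$ uniformly distributed on $\{0, 1, \ldots, q_n-1\}$; this is the standard consequence of the observation that each cylinder $\{x : E_1 = a_1, \ldots, E_n = a_n\}$ is an interval of length $(q_1 \cdots q_n)^{-1}$. Note first that $1$-convergence means exactly $\sum_{n=1}^\infty q_n^{-1} < \infty$, and that this already forces $q_n \to \infty$ (if $q_n \le B$ for infinitely many $n$, those indices alone contribute infinitely much to the sum, so $Q$ is in fact infinite in limit).

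Fix $x$ and write $D_N(x) := \#(\SQx \cap \{0, 1, \ldots, N-1\})$ for the number of distinct digits below $N$ occurring in the expansion of $x$. Since the density of $\SQx$ is $\lim_{N \to \infty} D_N(x)/N$, it suffices to show $D_N(x)/N \to 0$ for almost every $x$. I would split the digits below $N$ according to the size of $q_n$: set $C(N) := \#\{n : q_n \le N\}$ and $Y_N(x) := \#\{n : q_n > N \text{ and } E_n < N\}$. Every element of $\SQx \cap \{0, \ldots, N-1\}$ is some value $E_n$ with either $q_n \le N$ (at most $C(N)$ of these) or $q_n > N$ and $E_n < N$ (at most $Y_N(x)$ of these), so $D_N(x) \le C(N) + Y_N(x)$.

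The term $C(N)$ is deterministic and is $o(N)$: given $\epsilon > 0$, choose $n_0$ with $\sum_{n > n_0} q_n^{-1} < \epsilon$; among indices $n > n_0$ with $q_n \le N$ we have $q_n^{-1} \ge N^{-1}$, so there are at most $N\epsilon$ of them, whence $C(N) \le n_0 + N\epsilon$ and $\limsup_N C(N)/N \le \epsilon$. The term $Y_N$ is where randomness enters: it is a sum of independent indicators $\mathbf{1}[E_n < N]$ over $\{n : q_n > N\}$ with $\mathbb{P}(E_n < N) = N/q_n$, so $\mathbb{E}[Y_N] = N R(N)$ and $\mathrm{Var}(Y_N) \le N R(N)$, where $R(N) := \sum_{n : q_n > N} q_n^{-1} \to 0$ since $\sum_n q_n^{-1}$ converges. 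Chebyshev's inequality then gives $\mathbb{P}(|Y_N - \mathbb{E}[Y_N]| > \delta N) \le R(N)/(\delta^2 N)$.

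The main point, and the only real obstacle, is upgrading convergence in expectation to an almost-sure statement. Along the geometric subsequence $N_k = 2^k$ the bounds $R(N_k)/(\delta^2 N_k) \le R_0/(\delta^2 2^k)$, with $R_0 := \sum_n q_n^{-1} < \infty$, are summable in $k$, so by the Borel--Cantelli lemma, for each fixed $\delta$ we have $Y_{N_k}/N_k \le R(N_k) + \delta$ eventually almost surely; letting $\delta \to 0$ through a countable set yields $Y_{N_k}(x)/N_k \to 0$ almost surely. Combined with $C(N_k)/N_k \to 0$ this gives $D_{N_k}(x)/N_k \to 0$ almost surely. Finally, since $D_N$ is nondecreasing in $N$ and $N_{k+1}/N_k = 2$, for $N_k \le N < N_{k+1}$ we have $D_N(x)/N \le 2\, D_{N_{k+1}}(x)/N_{k+1} \to 0$, so $D_N(x)/N \to 0$ for almost every $x$, as required. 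It is the independence of the $Y_N$-summands (giving a clean variance bound) together with the monotonicity of $D_N$ (allowing interpolation back to the full sequence) that make this step work.
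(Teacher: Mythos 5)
The paper does not actually prove this statement: it is quoted as a known theorem of Erd\H{o}s and R\'enyi with a citation to their 1959 paper, so there is no internal proof to compare against. Judged on its own, your argument is correct and complete. The decomposition $D_N \le C(N) + Y_N(x)$ is valid (each value in $\SQx \cap \{0,\dots,N-1\}$ has an injectively chosen witness index $n$, which falls into exactly one of the two classes), the bound $C(N)=o(N)$ follows as you say from $\sum q_n^{-1}<\infty$, and the tail quantity $R(N)=\sum_{n:\,q_n>N} q_n^{-1}$ tends to $0$ by dominated convergence on the convergent series. The variance bound for the sum of independent indicators, Chebyshev along $N_k=2^k$ with the summable bound $R_0/(\delta^2 2^k)$, Borel--Cantelli, and the interpolation back to all $N$ via monotonicity of $D_N$ and the ratio $N_{k+1}/N_k=2$ are all sound. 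The only point worth making explicit is that $Y_N$ is a sum over the infinite index set $\{n: q_n>N\}$, so one should note that it is almost surely finite and square-integrable because its expectation $NR(N)$ and variance are finite; this is immediate from $\sum q_n^{-1}<\infty$ and does not affect the argument. Your route (second-moment method plus a geometric subsequence) is a standard and clean way to get the almost-sure statement; a first-moment alternative would bound $\mathbb{P}(k\in\SQx)\le R(k)$ and use the Ces\`aro average of $R(k)$, but that yields the limit less directly, so your approach is arguably the more efficient one.
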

\begin{thrm}[P. Erd\H{o}s  and A. R\'{e}nyi]\labt{erdosrenyinumberdigits}
For $x=E_0.E_1E_2\cdots\wrtQ$, let $d_n(x)$ denote the number of different numbers in the sequence $E_1,\cdots,E_n$.  If $Q$ is $1$-convergent, then  for almost every $x$ we have
$
\lim_{n \to \infty} \frac {d_n(x)}{n}=1.
$
\end{thrm}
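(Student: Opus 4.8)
The plan is to reinterpret $d_n(x)$ probabilistically and reduce the claim to an estimate on the expected number of \emph{repeated} digits. Recall the standard fact that under Lebesgue measure the digits $E_1,E_2,\ldots$ of the $Q$-Cantor series expansion are independent random variables with $E_j$ uniformly distributed on $\{0,1,\ldots,q_j-1\}$; this is immediate from the observation that the cylinder $\{E_1=a_1,\ldots,E_n=a_n\}$ is an interval of length $1/(q_1\cdots q_n)$, so that $\mathbb{P}(E_1=a_1,\ldots,E_n=a_n)=\prod_j \mathbb{P}(E_j=a_j)$. Writing $X_j=\mathbf{1}[\,E_j\in\{E_1,\ldots,E_{j-1}\}\,]$ for the event that the $j$th digit repeats an earlier one and $R_n=\sum_{j=1}^n X_j$, we have the exact identity $d_n=n-R_n$. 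Since $d_n\le n$ always, it suffices to prove that $R_n/n\to 0$ for almost every $x$.

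First I would bound the mean. For $i<j$, independence and uniformity give $\mathbb{P}(E_i=E_j)=\min(q_i,q_j)/(q_iq_j)=1/\max(q_i,q_j)$, so $\mathbb{P}(X_j=1)\le\sum_{i<j}1/\max(q_i,q_j)$. The hypothesis that $Q$ is $1$-convergent means precisely that $S:=\sum_{j\ge 1}1/q_j<\infty$; in particular $q_j\to\infty$. Given $\epsilon>0$, choose $M$ with $\sum_{i\ge M}1/q_i<\epsilon$. Splitting the inner sum at $M$ gives $\sum_{i<M}1/\max(q_i,q_j)\le (M-1)/q_j$, which is $<\epsilon$ once $j$ is large (using $q_j\to\infty$), together with $\sum_{M\le i<j}1/\max(q_i,q_j)\le\sum_{i\ge M}1/q_i<\epsilon$; hence $\mathbb{P}(X_j=1)<2\epsilon$ for all large $j$. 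As $\epsilon$ is arbitrary this shows $\mathbb{P}(X_j=1)\to 0$, and taking Cesàro averages yields $\mathbb{E}[R_n]/n\to 0$, i.e. $\mathbb{E}[R_n]=o(n)$. This is the step in which the $1$-convergence hypothesis is used, and it is the analytic core of the argument.

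To upgrade convergence in mean to almost-everywhere convergence I would apply the bounded-differences (McDiarmid) inequality to $R_n$, viewed as a function of the independent coordinates $E_1,\ldots,E_n$. The key combinatorial observation is that although $R_n$ depends on all $n$ digits, altering a single digit $E_j$ changes $R_n$ by at most $3$: it can flip the status $X_j$ (by $1$), and among positions $l>j$ it can only change the repeat-status of the first later occurrence of the \emph{old} value of $E_j$ and of the first later occurrence of the \emph{new} value (by at most $2$ in total), since all subsequent occurrences of those values still have a predecessor. With every bounded-difference constant at most $3$, McDiarmid's inequality gives $\mathbb{P}(\,|R_n-\mathbb{E}R_n|\ge\epsilon n\,)\le 2\exp(-2\epsilon^2 n/9)$, which is summable in $n$. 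By Borel--Cantelli, for almost every $x$ we have $|R_n-\mathbb{E}R_n|\le\epsilon n$ for all large $n$; combined with $\mathbb{E}[R_n]=o(n)$ and a countable intersection over $\epsilon=1/m$, this yields $R_n/n\to 0$ almost everywhere, and therefore $d_n/n\to 1$.

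I expect the main obstacle to be the mean estimate, specifically organizing the two-regime split so that the ``small index'' contribution $(M-1)/q_j$ (controlled by $q_j\to\infty$) and the ``large index'' tail $\sum_{i\ge M}1/q_i$ (controlled by summability) are \emph{simultaneously} $o(1)$. The concentration step is then routine once the $O(1)$ bounded-difference constant is verified, and that verification — noting that a single digit change propagates to at most one later position per affected value — is the only other delicate point.
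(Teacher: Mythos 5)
The paper does not prove this statement: it is quoted as a classical result of Erd\H{o}s and R\'{e}nyi from \cite{ErdosRenyiConvergent} and used as motivation, so there is no internal proof to compare against. Your argument, read on its own, is correct and complete. The digit independence and uniformity claim is right, the identity $d_n=n-R_n$ is exact, the computation $\mathbb{P}(E_i=E_j)=1/\max(q_i,q_j)$ is correct, and the two-regime split (finitely many small indices controlled by $q_j\to\infty$, the tail controlled by $\sum 1/q_i<\infty$) does give $\mathbb{P}(X_j=1)\to 0$ and hence $\mathbb{E}[R_n]=o(n)$; this is exactly where $1$-convergence enters. Your bounded-difference constant of $3$ is valid (in fact $d_n$ itself has bounded differences $1$, since changing one digit removes one copy of the old value and adds one copy of the new value, altering the count of distinct values by at most one --- so you could take the constant to be $1$ and get a slightly cleaner exponent), and McDiarmid plus Borel--Cantelli and the countable intersection over $\epsilon=1/m$ finishes the almost-everywhere statement. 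The original 1959 proof necessarily proceeds differently --- Erd\H{o}s and R\'{e}nyi work with first and second moments of the relevant counting variables and deduce almost-sure convergence by Chebyshev-type estimates along a sparse subsequence together with monotonicity/interpolation, since exponential concentration inequalities for functions of independent variables were not available to them. Your route trades that delicate variance bookkeeping for an off-the-shelf concentration inequality, which is a legitimate and arguably cleaner modern substitute; the one point worth making explicit in a written version is that the independence of the digits holds after discarding the Lebesgue-null set where the expansion is not of the required form.
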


It should be noted that T. \u{S}al\'{a}t \cite{Salat5} 
considered sets related to those mentioned in \reft{erdosrenyidensity} and \reft{erdosrenyinumberdigits}.
We will need the following definition from \cite{BarlowTaylorMassDimension}.

\begin{definition}\labd{massdimension}
For $S \subseteq \mathbb{Z}$, define the {\it mass dimension of $S$} to be the limit
$$
\dimm{S}=\lim_{n \to \infty} \frac {\log \#(S \cap (-n/2,n/2))}{\log n},
$$
if it exists.
\end{definition}

We note that an {\it upper mass dimension} and a {\it lower mass dimension} may be defined similarly by changing the limit in \refd{massdimension} to a $\limsup$ or a $\liminf$.

For non-empty $S \subseteq \mathbb{N}_0$, define
$$
\WQS=\br{x \in \mathbb{R} : \SQx=S}.
$$

We will build on \reft{erdosrenyidensity} and \reft{erdosrenyinumberdigits} by proving the following theorem.
\begin{thrm} \labt{digitrange}
If $Q$ is infinite in limit, $\lim_{n \to \infty} \frac{\log q_n}{\sum_{i=1}^n \log q_i} =0$, and $S\subseteq \mathbb{N}$ such that $\min S<\min Q$ and $\dimm{S}$ exists, then 
$$
\dimh{\WQS} = \dimm{S}.
$$
\end{thrm}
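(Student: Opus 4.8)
The plan is to prove matching upper and lower bounds. Write $\alpha := \dimm{S}$, let
$$
c_n := \#\pr{S \cap \br{0,1,\ldots,q_n-1}}
$$
be the number of digits of $S$ that are admissible in position $n$, and set $L_n := \sum_{i=1}^n \log q_i$. The hypothesis $\min S < \min Q$ guarantees $c_n \ge 1$ for every $n$, so that $\WQS$ is non-empty and the smallest digit of $S$ is always available as ``padding''. Two facts drive everything. First, since $Q$ is infinite in limit, $q_n \to \infty$, so $c_n = A(q_n-1)$ with $A(N) := \#(S\cap[0,N])$, and the definition of mass dimension gives $\log c_n = (\alpha + o(1))\log q_n$. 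Second, applying the Toeplitz (weighted Cesàro) lemma with the nonnegative weights $\log q_n$, whose partial sums $L_n$ diverge, yields
$$
\lim_{n\to\infty} \frac{1}{L_n}\sum_{j=1}^n \log c_j = \alpha .
$$

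For the upper bound I would cover $\WQS$ by the level-$n$ cylinder intervals $\br{x : E_1=a_1,\ldots,E_n=a_n}$, each of diameter $1/\q{n} = e^{-L_n} \to 0$. A point of $\WQS$ has every digit in $S$ and admissible, so at most $\prod_{j=1}^n c_j$ cylinders are required. For any $s > \alpha$,
$$
\pr{\prod_{j=1}^n c_j}\, e^{-sL_n} = \exp\!\pr{\,\sum_{j=1}^n \log c_j - sL_n} = \exp\!\pr{L_n\pr{\tfrac{1}{L_n}\textstyle\sum_{j=1}^n \log c_j - s}} \longrightarrow 0,
$$
since the inner factor converges to $\alpha - s < 0$ while $L_n\to\infty$. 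Hence $\dimh{\WQS}\le\alpha$. Only the containment $\SQx\subseteq S$ is used here, so the ``exactly $S$'' requirement is irrelevant to this direction.

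For the lower bound I would construct a Cantor subset of $\WQS$ carrying a measure $\mu$ and apply the mass distribution principle. Enumerate $S=\br{s_1,s_2,\ldots}$ and \emph{reserve} a very sparse increasing sequence of positions $n_1<n_2<\cdots$ with $q_{n_k}>s_k$, placing the digit $s_k$ deterministically at position $n_k$; this forces $\SQx=S$ exactly, so $\mu$ is supported in $\WQS$. At each remaining (``free'') position $j$ the digit is chosen uniformly among the $c_j$ admissible digits of $S$, so a supported level-$n$ cylinder has $\mu$-mass $\prod_{j\le n,\ \text{free}} c_j^{-1}$. Choosing the $n_k$ so sparse that $\sum_{n_k\le n}\log c_{n_k}=o(L_n)$ makes the reserved positions negligible, and the cylinder estimate becomes $-\log\mu(\text{cyl}) = (\alpha+o(1))L_n$.

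The main obstacle, and the sole place the hypothesis $\log q_n/L_n\to 0$ is genuinely used, is passing from cylinders to arbitrary balls. Given small $r$, pick $n$ with $e^{-L_n}\le r < e^{-L_{n-1}}$. Since the level-$(n-1)$ cylinders tile and have length $e^{-L_{n-1}} > r$, the ball $B(x,r)$ meets only boundedly many of them, so $\mu(B(x,r)) \le C\prod_{j\le n-1,\ \text{free}} c_j^{-1}$ for an absolute constant $C$; using $-\log r \le L_n$ this gives
$$
\frac{\log \mu(B(x,r))}{\log r} \ge \frac{\sum_{j\le n-1,\ \text{free}}\log c_j - \log C}{L_n} = (\alpha + o(1))\frac{L_{n-1}}{L_n} - \frac{\log C}{L_n}.
$$
The hypothesis forces $\log q_n = o(L_n)$, hence $L_{n-1}/L_n \to 1$, so the right-hand side tends to $\alpha$. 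By the mass distribution principle $\dimh{\WQS}\ge\alpha$, completing the proof. The delicate point is precisely that without the growth hypothesis the consecutive scales $e^{-L_{n-1}}$ and $e^{-L_n}$ may differ by an unbounded logarithmic factor, which would destroy this ball-to-cylinder comparison; this is why the condition cannot be dropped.
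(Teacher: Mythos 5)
Your argument is correct and follows essentially the same route as the paper: the upper bound comes from covering $\WQS$ by the cylinders whose digits are restricted to $S$, and the lower bound from the same Cantor subset in which a sparse set of reserved positions carries a prescribed surjection onto $S$ (forcing $\SQx=S$) while the remaining positions range freely over the admissible digits of $S$. The only difference is that you re-derive the dimension of this Moran-type set by hand---covering sums for the upper bound and the mass distribution principle with a ball-to-cylinder comparison for the lower bound, the hypothesis $\lim_n \log q_n/\sum_{i=1}^n\log q_i=0$ entering exactly where the paper also needs it---whereas the paper invokes the Feng--Wen--Wu homogeneous Moran set theorem, via \refl{HDT} for the upper bound and directly for the lower bound.
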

T. \u{S}al\'{a}t proved in \cite{Salat3} that under some conditions on the basic sequence $Q$ the set of real numbers whose digits in their $Q$-Cantor expansion is bounded has zero Hausdorff dimension.  We remark that his result may be sharpened with his conditions weakened by use of our \refl{HDT} instead of Satz 1 from \cite{Salat7}.  The proof of this otherwise follows identically to his original proof, so we do not record it in this paper.

If $Q$ is infinite in limit and not fully divergent, then $\lmeas{\RNQ}=0$.  We will show as a consequence of the following theorem that $\dimh{\RNQ}=1$ whenever $Q$ is infinite in limit.
\begin{thrm}\labt{RDN}
If $Q$ is infinite in limit, then $\dimh{\RDN}=1$.
\end{thrm}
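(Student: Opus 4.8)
The plan is to exhibit, by an explicit Cantor-type construction, a Borel subset $K\subseteq\RDN$ with $\dimh{K}=1$; since every difference set here lies (up to integer translates) in $[0,1)$ and so has dimension at most $1$, this suffices. The first step is to record a convenient coarse-scale description of $\DNQ$ when $Q$ is infinite in limit. Writing $T_{Q,n}(x)=\sum_{m\ge1}E_{n+m}/(q_{n+1}\cdots q_{n+m})$, the terms beyond the first sum to at most $1/q_{n+1}$, so $T_{Q,n}(x)=E_{n+1}/q_{n+1}+O(1/q_{n+1})$; since $q_n\to\infty$, the Weyl sums of $(T_{Q,n}(x))$ and of the normalized digit sequence $(E_n/q_n)$ have the same limits, and hence $x\in\DNQ$ if and only if $(E_n/q_n)$ is u.d. in $[0,1)$. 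Thus $\DNQ$ is a purely \emph{coarse-scale} condition on the digits, whereas $x\in\NQ$ and $x\in\RNQ$ are \emph{fine-scale} conditions on the frequencies of fixed blocks (whose digits are eventually small compared to $q_n$). This separation of scales is the engine of the construction.

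Next I would partition $\mathbb{N}$ into consecutive super-blocks $I_1,I_2,\dots$ and, inside each $I_t$, restrict the admissible digit strings to a ``type class'' so designed that \emph{every} point of the resulting Cantor set has all three properties. With slowly growing parameters $k_t\to\infty$, $M_t\to\infty$, and tolerances $\varepsilon_t\to0$, I would require the admissible strings on $I_t$ to: (a) make the empirical distribution of $(E_j/q_j)_{j\in I_t}$ lie within $\varepsilon_t$ of Lebesgue measure on $[0,1)$; (b) realize equal counts, up to lower order, of all blocks of length $\le k_t$ built from digits $\le M_t$, supplemented by a sparse planting of each such block on positions of zero natural and zero logarithmic density so that $N_n^Q(B,x)\to\infty$ even for orders in which $Q$ is convergent; and (c) on a designated sub-collection $A\cap I_t$ of positions, forbid all digit values below $M_t$. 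Condition (a) forces $(E_n/q_n)$ to be u.d., so $x\in\DNQ$; conditions (b) force $x\in\RNk{Q}{k}$ for every $k$, hence $x\in\RNQ$; and, when $Q$ is $1$-divergent, condition (c) makes $N_n^Q(b,x)$ accrue only off $A$, so that choosing $A$ with positive mass density $\liminf\big(\sum_{j\in A,\,j\le n}q_j^{-1}\big)/Q_n^{(1)}>0$ gives $N_n^Q(b,x)/Q_n^{(1)}\not\to1$ and $x\notin\NQ$. When $Q$ is not fully divergent, $\NQ=\varnothing$ and (c) is unnecessary.

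For the dimension lower bound I would apply \refl{HDT} to this admissible set. The key point is that each of (a)--(c) is met by all but a subexponential proportion of the digit strings on $I_t$: $\varepsilon_t$-equidistribution is typical by the law of large numbers; balancing the $O(M_t^{k_t})$ small-block counts and planting sparsely costs a factor $e^{o(\sum_{j\in I_t}\log q_j)}$; and forbidding the bottom $M_t=o(q_j)$ values changes each $\log|D_j|$ by $o(\log q_j)$. Hence the number of admissible strings through level $n$ is $\prod_{j\le n}q_j\cdot e^{o(\sum_{j\le n}\log q_j)}$, and \refl{HDT} yields $\dimh{K}=1$.

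The main obstacle is calibrating the four parameter families so that all requirements hold \emph{simultaneously} and \emph{pointwise}, not merely almost everywhere. The planted positions and the set $A$ must be sparse enough in logarithmic density to preserve full dimension and sparse enough in natural density to leave the coarse equidistribution (a) intact, yet $A$ must still carry positive mass density in order to defeat normality; and the balancing in (b) must be enforced up to orders $k_t\to\infty$. Verifying that the fine-scale manipulations of (b) and (c) are invisible to the coarse-scale condition (a) --- that suppressing and planting a vanishing proportion of digit values does not disturb the equidistribution of $(E_n/q_n)$ --- is the crux, and is precisely where the scale separation isolated in the first step is used.
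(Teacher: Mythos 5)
Your overall architecture --- separating the coarse-scale condition ($E_n/q_n$ uniformly distributed, giving $\DNQ$) from the fine-scale block-counting conditions, imposing the former on a dense set of positions and the latter on a sparse set, and reading off the dimension from a Moran-type lemma --- is the same as the paper's, but two steps do not go through as written. First, non-normality is never actually forced. Your conditions (a)--(c) control the counts $N_n^Q(B,x)$ only relative to \emph{each other} (``equal counts, up to lower order''), whereas $x\notin\NQ$ is a statement about $N_n^Q(B,x)$ relative to $Q_n^{(k)}$. A string can satisfy (a)--(c) and still be $Q$-normal: the suppression on $A$ removes only about $c\,Q_n^{(1)}=o(n)$ expected occurrences of a digit $b$, and nothing in (a) or (b) prevents the off-$A$ positions from compensating, since (a) constrains only the coarse distribution of $E_j/q_j$ and (b) only equalizes the counts without pinning down their common size. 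Likewise the planted occurrences are calibrated to have zero natural and zero logarithmic density, but that says nothing about their number relative to $Q_n^{(1)}$ (both are $o(n)$), so $N_n^Q(b,x)/Q_n^{(1)}$ is simply left uncontrolled. You need an explicit quantitative comparison. The paper gets one by copying the digits of a fixed $\xi\in\N{P}$ (with $p_i=\floor{\log i}+2$) onto a sparse set $S$ of positions, forcing every digit off $S$ to be at least $\ceil{\log i(n)}$ so that all occurrences of a fixed block eventually lie inside $S$, and choosing the block lengths $L_n$ so that $Q_n^{(k)}/\sum_{i\le i(n)}P^{(k)}_{i-k+1}\to 0$; then $N_n^Q(B,x)/Q_n^{(k)}\to\infty$, and ratio normality is inherited from $\xi$ in the same stroke.

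Second, \refl{HDT} (and the Feng--Wen--Wu theorem behind it) applies to homogeneous Moran sets, i.e.\ to constraints of the form $E_n\in V(n)$ with $V(n)$ independent of the other digits. Your ``type classes'' in (a) and (b) are joint conditions on all digits of a super-block, so the admissible tree is not a homogeneous Moran structure and the cited lemma does not apply; establishing the lower bound for such a set would require a mass-distribution argument for the uniform measure on admissible strings, a substantially different and heavier proof. The paper avoids this by keeping every constraint position-wise: off $S$ the digit $E_n$ ranges over the interval $[x_nq_n-\omega_n,x_nq_n+\omega_n)\cap[\ceil{\log i(n)},q_n-1]$ for a fixed uniformly distributed sequence $(x_n)$, with $\omega_n=q_n^{1-\epsilon_{i(n)}}$, which yields $\DNQ$ via \refl{DiscKuN} while keeping $\log|V(n)|=(1-\epsilon_{i(n)})\log q_n$ and $\epsilon_{i(n)}\to0$, so the Moran bound gives dimension $1$ directly. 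Both defects are repairable, but repairing them essentially reproduces the paper's construction.
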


Lastly, we remark that some of the techniques developed in this paper and \refl{HDT} are used to study fractals associated with normality preserving operations in \cite{AireyManceVandehey}.

\section{Lemmas}
%\subsection{Homogeneous Moran set structure}
 Let $( n_k )$ be a sequence of positive integers and $(c_k)$ be a sequence of positive numbers such that $n_k \geq 2$, $0<c_k<1$, $n_1 c_1 \leq \delta$, and $n_k c_k \leq 1$, where $\delta$ is a positive real number. For any $k$, let $D_k = \{ (i_1, \cdots, i_k): 1\leq i_j \leq n_j, 1\leq j \leq k \}$, and $D = \bigcup D_k$, where $D_0 =\emptyset$. If $\sigma = ( \sigma_1, \cdots , \sigma_k) \in D_k$, $\tau = (\tau_1 ,\cdots , \tau_m) \in D_m$, put $\sigma * \tau = (\sigma_1, \cdots , \sigma_k, \tau_1, \cdots , \tau_m)$.

\begin{definition}
Suppose $J$ is a closed interval of length $\delta$.  The collection of closed subintervals $ \mathcal{F} = \{ J_\sigma : \sigma \in D\}$ of $J$ has \textit{homogeneous Moran structure} if:
\begin{enumerate}
	\item $J_{\emptyset} = J$;
	\item $\forall k \geq 0, \sigma \in D_k, J_{\sigma *1}, \cdots , J_{\sigma * n_{k+1}}$ are subintervals of $J_\sigma$ and $\mathring{J}_{\sigma*i}\cap \mathring{J}_{\sigma*j}=\emptyset$ for $i \neq j$;
	\item $\forall k \geq 1, \forall \sigma \in D_{k-1}, 1\leq j \leq n_k$, $c_k = \frac{\lambda(J_{\sigma*j})}{\lambda(J_\sigma)}$.
\end{enumerate}
\end{definition}

Suppose that $\mathcal{F}$ is a collection of closed subintervals of $J$ having homogeneous Moran structure. Let $E(\mathcal{F}) = \bigcap_{k\geq 1} \bigcup_{\sigma \in D_k} J_\sigma$. We say $E(\mathcal{F})$ is a \textit{homogeneous Moran set determined by} $\mathcal{F}$, or it is a \textit{homogeneous Moran set determined by} $J$, $( n_k )$, $( c_k )$. We will need the following theorem of D. Feng, Z. Wen, and J. Wu from \cite{FengWenWu}.

\begin{thrm}[D. Feng, Z. Wen, and J. Wu]
If $S$ is a homogeneous Moran set determined by $J$, $(n_k )$, $( c_k )$, then
\begin{equation*}
\liminf_{k \to \infty} \frac{\log n_1 n_2 \cdots n_k}{-\log c_1 c_2 \cdots c_{k+1} n_{k+1}} \leq \dimh{S} \leq \liminf_{k \to \infty} \frac{\log n_1 n_2 \cdots n_k}{-\log c_1 c_2 \cdots c_k}.
\end{equation*}
\end{thrm}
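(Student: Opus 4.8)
The plan is to prove the two inequalities separately. For the upper bound I would use the natural covering of $E(\mathcal{F})$ by the $n_1 n_2 \cdots n_k$ basic intervals of the $k$-th level, each of which has length $\delta c_1 c_2 \cdots c_k$. Given any $s$ strictly larger than the right-hand $\liminf$, I would pick $s'$ with $\liminf_k \frac{\log n_1\cdots n_k}{-\log c_1\cdots c_k}<s'<s$ together with a subsequence $(k_j)$ along which $n_1\cdots n_{k_j}\,(c_1\cdots c_{k_j})^{s'}<1$. The $s$-dimensional sum over the level-$k_j$ covering is then at most $\delta^s (c_1\cdots c_{k_j})^{s-s'}$, which tends to $0$ because $c_i\le 1/2$ (a consequence of $n_ic_i\le 1$ and $n_i\ge 2$) forces $c_1\cdots c_k\to 0$, while the diameters of the covering intervals also tend to $0$. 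Hence $\mathcal{H}^s(E(\mathcal{F}))=0$ for every such $s$, which yields the upper bound.

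For the lower bound I would invoke the mass distribution principle with the natural probability measure $\mu$ defined by $\mu(J_\sigma)=(n_1\cdots n_k)^{-1}$ for each $\sigma\in D_k$, a Borel probability measure supported on $E(\mathcal{F})$. Writing $N_k=n_1\cdots n_k$ and $L_k=\delta c_1\cdots c_k$ for the common length of a level-$k$ interval, I would fix $x\in E(\mathcal{F})$ and small $r>0$, choose $k$ with $L_{k+1}\le r<L_k$, and estimate how many level-$(k+1)$ intervals meet $B(x,r)$. Since distinct level-$(k+1)$ intervals have disjoint interiors their left endpoints are at least $L_{k+1}$ apart, and since $r<L_k$ the ball can meet only a bounded number of level-$k$ intervals; combining these two facts, $B(x,r)$ meets $O\!\left(\min\{r/L_{k+1},\,n_{k+1}\}\right)$ level-$(k+1)$ intervals. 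Multiplying by the mass $N_{k+1}^{-1}$ of each such interval gives
\[
\mu(B(x,r))\le C\,\min\left\{\frac{r}{N_{k+1}L_{k+1}},\ \frac{1}{N_k}\right\}.
\]

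The crux is to convert this into a bound $\mu(B(x,r))\le C' r^s$ valid for every $s<\liminf_k\frac{\log N_k}{-\log c_1\cdots c_{k+1}n_{k+1}}$. Both scale regimes, $r\ge n_{k+1}L_{k+1}$ and $L_{k+1}\le r<n_{k+1}L_{k+1}$, reduce after taking logarithms and using $s<1$ to the single arithmetic requirement $(n_{k+1}L_{k+1})^{-s}\le C'' N_k$, equivalently $\log N_k\ge -s\log(c_1\cdots c_{k+1}n_{k+1})+O(1)$, which holds for all large $k$ exactly when $s$ lies below the claimed $\liminf$. The mass distribution principle then gives $\dimh{E(\mathcal{F})}\ge s$, and letting $s$ increase to the $\liminf$ completes the argument. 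I expect the main obstacle to be this lower bound: specifically, the geometric counting of how many $(k+1)$-level intervals a ball of radius $r$ can intersect when the subintervals are permitted to touch (zero gaps), together with verifying that the two scale regimes collapse to the same condition on $N_k$.
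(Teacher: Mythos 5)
The paper does not prove this statement: it is imported verbatim from the cited work of Feng, Wen, and Wu, so there is no internal proof to compare against. Your argument is correct and is essentially the standard one from that source: the upper bound via the level-$k$ covering (with the subsequence trick to pass from the $\liminf$ to a vanishing $s$-sum, using $c_1\cdots c_k\to 0$), and the lower bound via the mass distribution principle applied to the uniform measure $\mu(J_\sigma)=(n_1\cdots n_k)^{-1}$. The two points you flag as delicate do go through: intervals of common length $L$ with pairwise disjoint interiors meeting a ball of radius $r$ have left endpoints separated by at least $L$, which gives both the $O(r/L_{k+1})$ count and the $O(1)$ count of level-$k$ intervals when $r<L_k$; and since $n_ic_i\le 1$ forces the target exponent to satisfy $s<1$, both scale regimes do reduce to $(n_{k+1}L_{k+1})^{-s}\le C\,n_1\cdots n_k$, which is exactly the condition that $s$ lie below the left-hand $\liminf$.
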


Given basic sequences $\alpha = (\alpha_i)$ and  $\beta = (\beta_i)$, 
sequences of non-negative integers $s = (s_i), t = (t_i), \upsilon = (\upsilon_i),$ and $F = (F_i)$, and a sequence of sets $I = (I_i)$ such that $I_i \subseteq \{0, 1, \cdots, \beta_i-1\}$, define the set $\Theta(\alpha, \beta, s, t, \upsilon, F, I)$ as follows.
Let $Q=Q(\alpha, \beta, s, t, \upsilon)=(q_n)$ be the following basic sequence:
\begin{equation}
\left[[\alpha_1]^{s_1} [\beta_1]^{t_1}\right]^{\upsilon_1} \left[[\alpha_2]^{s_2} [\beta_2]^{t_2}\right]^{\upsilon_2} \left[[\alpha_3]^{s_3} [\beta_3]^{t_3}\right]^{\upsilon_3} \cdots.
\end{equation}
Define the function
$$
i(n) = \min \br{t : \sum_{i=1}^{t-1} \upsilon_i (s_i + t_i) < n}.
$$
%and
%$$
%i_\alpha (n) = \min \br{t : \sum_{i=1}^{t-1} \upsilon s_i < n }.
%$$
Set 
$$
\Phi_\alpha(i,c,d) = \sum_{j =1}^{i-1} \upsilon_j s_j + c s_i + d
$$ 
where $0 \leq c < \upsilon_i$ and $0 \leq d < s_i$ and let the functions $i_\alpha(n)$, $c_\alpha(n)$, and $d_\alpha(n)$ be such that $\Phi^{-1}_\alpha(n) = (i_\alpha(n), c_\alpha(n), d_\alpha(n))$. Note this is possible since $\Phi_\alpha$ is a bijection from $\mathcal{U} = \br{(i,c,d) \in \mathbb{N}^3 : 0 \leq c < \upsilon_i, 0 \leq d < s_i}$ to $\mathbb{N}$. Define the functions
$$
G(n) = \sum_{j=1}^{i_\alpha(n)-1} \upsilon_j (s_j+t_j) + c_\alpha(n)\pr{s_{i_\alpha(n)} + t_{i_\alpha(n)}} + d_\alpha(n)
$$ 
and $g(n) = \min \br{t : G(t) \geq n}$. Note that $i_\alpha(g(n)) = i(n)$ and $c_\alpha(g(n)) = c_\alpha(n)$. Furthermore, define $C_\alpha(n) = \pr{\sum_{j=1}^{i_\alpha(n)-1} u_j} + c_\alpha(n)$.

We consider the condition on $n$
\begin{equation}\labeq{VNcond}
\pr{n-\sum_{j=1}^{i(n)-1} \upsilon_j(s_j+t_j)} \mod (s_{i(n)}+t_{i(n)}) \geq s_{i(n)}.
\end{equation}
Define the intervals 
$$
V(n) =
\begin{cases}
	I_{i(n)} & \text{ if condition \refeq{VNcond} holds}  \\
\ \\
	\left [F_{G(n)}, F_{G(n)}+1 \right ) & \text{ else}
\end{cases}.
$$
That is, we choose digits from $I_{i(n)}$ in positions corresponding to the bases obtained from the sequence $\beta$ and choose a specific digit from $F$ for the bases obtained from the sequence $\alpha$.
Set
$$
\Theta(\alpha, \beta, s, t, \upsilon, F, I) = \br{x = 0.E_1 E_2 \cdots \wrt{Q} : E_n \in V(n)}.
$$
We will need the following basic lemma to prove \refl{HDT} and elsewhere in this paper.
\begin{lem}\labl{tcorr}
Let $L$ be a real number and $(a_n)_{n=1}^\infty$ and $(b_n)_{n=1}^\infty$ be two sequences of positive real numbers such that
$$
\sum_{n=1}^{\infty} b_n=\infty \hbox{ and } \lim_{n \to \infty} \frac {a_n} {b_n}=L.
$$
Then
$$
\lim_{n \to \infty} \frac {a_1+a_2+\ldots+a_n} {b_1+b_2+\ldots+b_n}=L.
$$
\end{lem}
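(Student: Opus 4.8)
The plan is to establish this by the standard Stolz--Cesàro argument, exploiting the positivity of the $b_n$ together with the divergence of their sum. Write $A_n = a_1 + a_2 + \cdots + a_n$ and $B_n = b_1 + b_2 + \cdots + b_n$. Since every $b_n > 0$ and $\sum_{n=1}^\infty b_n = \infty$, we have $B_n \to \infty$ and $B_n > 0$ for all $n$, so the quotients $A_n/B_n$ are well defined and the target limit makes sense.

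First I would fix $\epsilon > 0$ and invoke the hypothesis $a_n/b_n \to L$ to choose an index $N$ so that $\abs{a_n/b_n - L} < \epsilon$ for all $n > N$. Because $b_n > 0$, this is equivalent to the two-sided bound $(L-\epsilon) b_n < a_n < (L+\epsilon) b_n$ for every $n > N$, and summing these inequalities over $k = N+1, \ldots, n$ gives
\[
(L-\epsilon)(B_n - B_N) < A_n - A_N < (L+\epsilon)(B_n - B_N).
\]
Dividing through by $B_n$ and rearranging then yields
\[
(L-\epsilon)\frac{B_n - B_N}{B_n} + \frac{A_N}{B_n} < \frac{A_n}{B_n} < (L+\epsilon)\frac{B_n - B_N}{B_n} + \frac{A_N}{B_n}.
\]

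Next I would let $n \to \infty$ with $N$ held fixed: since $B_n \to \infty$, the quantities $A_N/B_n$ and $B_N/B_n$ tend to $0$, so $(B_n - B_N)/B_n \to 1$. The displayed inequalities therefore give $L - \epsilon \le \liminf_{n} A_n/B_n$ and $\limsup_{n} A_n/B_n \le L + \epsilon$. As $\epsilon > 0$ was arbitrary, letting $\epsilon \to 0$ forces $\liminf_{n} A_n/B_n = \limsup_{n} A_n/B_n = L$, which is exactly the assertion.

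There is no genuine obstacle here; the only point requiring care is the order of quantifiers. The splitting index $N$ must be chosen \emph{before} sending $n \to \infty$, so that the finite head $A_N$ (and likewise $B_N$) becomes negligible against the diverging denominator $B_n$. The hypothesis $\sum b_n = \infty$ is precisely what makes $B_n \to \infty$ and hence renders the head negligible, while the positivity of the $b_n$ is what permits summing the inequalities $(L\pm\epsilon)b_k \lessgtr a_k$ without reversing their direction.
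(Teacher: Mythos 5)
Your proof is correct: it is the standard Stolz--Ces\`aro/Toeplitz argument, splitting off the finite head at a fixed index $N$ and using $B_n\to\infty$ (guaranteed by $\sum b_n=\infty$) to make the head negligible, with positivity of the $b_n$ justifying the summed inequalities. The paper states this lemma without proof, treating it as a basic fact, and the argument you give is exactly the one intended.
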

\begin{lem}\labl{HDT}
Given basic sequences $\alpha = (\alpha_i)$ and  $\beta = (\beta_i)$, 
sequences of non-negative integers  $s = (s_i), t = (t_i), \upsilon = (\upsilon_i),$ and $F = (F_i)$, and a sequence of sets $I = (I_i)$ such that $I_i \subseteq \{0, 1, \cdots, \beta_i -1\}$ such that the following conditions hold:

\begin{align}
\labeq{HDT1}
&\lim_{n \to \infty} \frac{s_n \log \alpha_n}{\sum_{i=1}^{n-1} \upsilon_i t_i \log \beta_i} = 0;\\
\labeq{HDT2}
&\lim_{n \to \infty} \frac{s_n \log \alpha_n}{t_n \log \beta_n} = 0. %\\
%\labeq{HDT4}
%&\lim_{n \to \infty} |I_n| = \infty.
\end{align}
Then 
$$
\dimh{\Theta(\alpha, \beta, s, t, \upsilon, F, I)} = \gamma:= \lim_{n \to \infty} \frac{\log |I_n|}{\log \beta_n}.
$$
\end{lem}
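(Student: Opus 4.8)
The plan is to present $\Theta=\Theta(\alpha,\beta,s,t,\upsilon,F,I)$ as a homogeneous Moran set and to read off its dimension from the theorem of D. Feng, Z. Wen, and J. Wu together with \refl{tcorr}. The Moran structure is the one furnished by the $Q$-Cantor series: if $J_\sigma$ is the interval on which $E_1,\dots,E_n$ are prescribed, its admissible children are obtained by letting $E_{n+1}$ range over $V(n+1)$, each child having relative length $1/q_{n+1}$. Thus a fixed ($\alpha$) position contributes a single child of ratio $1/\alpha_{i(n)}$ and a free ($\beta$) position contributes $|I_{i(n)}|$ children of ratio $1/\beta_{i(n)}$. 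To meet the hypothesis $n_k\ge 2$ of the Feng--Wen--Wu theorem I first absorb every maximal run of single-choice positions into the contraction ratio of the next multi-choice position, so that the level boundaries of the resulting homogeneous Moran set sit exactly at the $\beta$-positions with $|I_i|\ge 2$; if only finitely many such positions occur then $\Theta$ is (essentially) finite and $\gamma=0$ trivially, so I may assume infinitely many. Writing $\mathcal{N}(N)=\prod_{n\le N}|V(n)|$ and $\mathcal{C}(N)=q_1\cdots q_N$, the two estimates of the theorem become $\liminf_N \log\mathcal{N}(N)/\log\mathcal{C}(N)$ for the upper bound and the same ratio with the single extra summand $\log\!\big(q_{N+1}/|V(N+1)|\big)$ adjoined to the denominator for the lower bound, both $\liminf$'s being taken along the $\beta$-positions.

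Next I would show $\log\mathcal{N}(N)/\log\mathcal{C}(N)\to\gamma$. Setting $A_i=\upsilon_i s_i\log\alpha_i$, $B_i=\upsilon_i t_i\log\beta_i$, and $M_i=\upsilon_i t_i\log|I_i|$, at the end of block $i$ one has $\log\mathcal{N}=\sum_{j\le i}M_j$ and $\log\mathcal{C}=\sum_{j\le i}(A_j+B_j)$. Condition \refeq{HDT2} gives $A_i/B_i\to 0$, so \refl{tcorr} (applied with $b_i=B_i$, whose sum diverges) yields $\sum_{j\le i}A_j=o\big(\sum_{j\le i}B_j\big)$; a second application to $M_i/B_i=\log|I_i|/\log\beta_i\to\gamma$ gives $\sum_{j\le i}M_j=(\gamma+o(1))\sum_{j\le i}B_j$. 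Hence the ratio tends to $\gamma$ along block boundaries, and because the level boundaries now lie at $\beta$-positions the only further points to check are the $\beta$-positions interior to a block; there the numerator has already incorporated the current $\log|I_i|$ while the denominator carries the absorbed prefix run of length at most $s_i\log\alpha_i$, which is $o\big(\sum_{j<i}B_j\big)$ by \refeq{HDT1}. A mediant estimate then keeps the ratio within $o(1)$ of $\gamma$ throughout, so the upper bound of the Feng--Wen--Wu theorem gives $\dimh{\Theta}\le\gamma$.

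For the matching lower bound I must show that the extra summand $\log\!\big(q_{N+1}/|V(N+1)|\big)$ is $o\big(\log\mathcal{C}(N)\big)$. When the next multi-choice position lies in the same repetition this summand is $\log(\beta_i/|I_i|)\le\log\beta_i$, and when it opens a new repetition it is at most $s_i\log\alpha_i+\log\beta_i$, the $\alpha$-part being negligible by \refeq{HDT1}; in either case it is dominated by $\log\beta_i$, which is in turn negligible against the accumulated $\log\mathcal{C}(N)$ once the partial sums of the $B_j$ outgrow a single $\log\beta_i$. Granting this, the two denominators are asymptotically equal, the lower estimate also converges to $\gamma$, and the Feng--Wen--Wu theorem pins $\dimh{\Theta}=\gamma$. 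I expect this last step to be the main obstacle: one must verify, uniformly in $N$ and across the nested repetitions counted by $\upsilon$, $s$, and $t$, that neither a fixed-digit run nor an individual base ever grows large enough relative to the accumulated product $\mathcal{C}(N)$ to drag the $\liminf$ below $\gamma$. This is exactly the role of \refeq{HDT1} and \refeq{HDT2}: \refeq{HDT2} balances the $\alpha$- and $\beta$-weights within each block, while \refeq{HDT1} ensures the cumulative weight of the fixed positions never overtakes that of the free ones, and the careful bookkeeping needed to make these two conditions cover every $N$ is where the real work lies.
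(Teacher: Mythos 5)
Your proposal follows essentially the same route as the paper: realize $\Theta(\alpha,\beta,s,t,\upsilon,F,I)$ as a homogeneous Moran set with $n_k=|V(k)|$ and $c_k=1/q_k$, apply the Feng--Wen--Wu bounds, and evaluate the resulting $\liminf$'s via \refl{tcorr} together with \refeq{HDT1} and \refeq{HDT2}. If anything you are more scrupulous than the printed proof, which neither addresses the $n_k\geq 2$ hypothesis at the fixed-digit positions nor explicitly isolates the extra denominator term $\log\bigl(q_{N+1}/|V(N+1)|\bigr)$ that you correctly single out as the delicate point of the lower bound.
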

\begin{proof}
Note that $\Theta(\alpha, \beta, s, t, \upsilon, F, I)$ is a homogeneous Moran set with 
$$
n_k = 
\begin{cases}
|I_k| & \text{if } q_k = \beta_{i(k)}\\
1  & \text{if } q_k = \alpha_{i(k)}
\end{cases}
$$
and $c_k = \frac{1}{q_k}$.
Thus
\begin{align*}
& \dimh{\Theta(\alpha, \beta, s, t, \upsilon, F, I)} \geq \liminf_{k \to \infty} \frac{\log n_1 n_2 \cdots n_k}{-\log c_1 c_2 \cdots c_{k+1} n_{k+1}} \\
& \geq \lim_{n \to \infty} \frac{\sum_{j=1}^{i(n)-1} \sum_{k=1}^{u_j} t_j  \log|I_i| + \sum_{j=1}^{b(n)}  t_{i(n)} \log|I_{i(n)}|}
{\sum_{j=1}^{i(n)-1} \sum_{k=1}^{u_j} \left [t_j \log \beta_j + s_j \log \alpha_j \right ] + \sum_{j=1}^{b(n)} \left [ t_{i(n)} \log \beta_{i(n)}+ s_{i(n)} \log \alpha_{i(n)}  \right ] + s_{i(n)} \log \alpha_{i(n)}} \\
& = \lim_{n \to \infty} \frac{\pr{\sum_{j=1}^{i(n)-1} u_j t_j \gamma \log \beta_j}  + b(n) t_{i(n)} \gamma \log \beta_{i(n)}}
{\sum_{j=1}^{i(n)-1} \sum_{k=1}^{u_j} \left [t_j \log \beta_j + s_j \log \alpha_j \right ] + \sum_{j=1}^{b(n)} \left [ t_{i(n)} \log \beta_{i(n)}+ s_{i(n)} \log \alpha_{i(n)} \right ] + s_{i(n)} \log \alpha_{i(n)}}\\
&\hbox{where we have used \refl{tcorr}.}\\
&=  \lim_{n \to \infty} \frac{\pr{\sum_{j=1}^{i(n)-1} u_j t_j \gamma \log \beta_j }  + b(n) t_{i(n)} \gamma \log \beta_{i(n)}}
{\pr{\sum_{j=1}^{i(n)-1} u_j t_j \log \beta_j }  + b(n) t_{i(n)} \log \beta_{i(n)}  + s_{i(n)} \log \alpha_{i(n)} }\\
&\hbox{which follows from \refeq{HDT2}.}\\
& = \lim_{n \to \infty} \frac{\pr{\sum_{j=1}^{i(n)-1} u_j t_j \gamma \log \beta_j } + b(n) t_{i(n)} \gamma \log \beta_{i(n)} }
{\pr{\sum_{j=1}^{i(n)-1} u_j t_j \log \beta_j} + b(n) t_{i(n)} \log \beta_{i(n)} } = \gamma.
\end{align*}
which we get from \refeq{HDT1}. The upper bound follows from a similar calculation.
\end{proof}

For a sequence of real numbers $X = (x_n)$ with $x_n \in [0,1)$ and an interval $I \subseteq [0,1]$,  define $A_n(I,X) = \# \{i\leq n: x_i \in I \}$.
We will need the following standard definition and lemma that we quote from \cite{KuN}.

\begin{definition}
Let $X = \pr{x_1, \cdots , x_N}$ be a finite sequence of real numbers. The number $$D_N = D_N(X) = \sup_{0 \leq \alpha \leq \beta \leq 1} \left | \frac{A_N([\alpha, \beta), X)}{N} - (\beta - \alpha) \right |$$ is called the {\it discrepancy} of the sequence $\omega$.
\end{definition}

It is well known that a sequence $X$ is uniformly distributed mod $1$ if and only if $D_N(X) \to 0$.

\begin{lem} \labl{DiscKuN}
Let $x_1, x_2, \cdots, x_N$ and $y_1, y_2, \cdots, y_N$ be two finite sequences in $[0,1)$.  Suppose $\epsilon_1, \epsilon_2, \cdots, \epsilon_N$ are non-negative numbers such that $|x_n-y_n| \leq \epsilon_n$ for $1 \leq n \leq N$.  Then, for any $\epsilon \geq 0$, we have
$$
|D_N(x_1,\cdots,x_N)-D_N(y_1,\cdots,y_N)| \leq 2\epsilon+\frac {\overline{N}(\epsilon)}{N},
$$
where $\overline{N}(\epsilon)$ denotes the number of $n$, $1 \leq n \leq N$, such that $\epsilon_n>\epsilon$.
\end{lem}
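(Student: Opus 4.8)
The plan is to bound the discrepancy of one sequence in terms of the other by a sandwiching argument on intervals, exploiting the fact that two points $x_n,y_n$ with $\epsilon_n \le \epsilon$ cannot cross the endpoints of an interval once it is dilated (respectively eroded) by $\epsilon$, while the comparatively few indices with $\epsilon_n > \epsilon$ are simply absorbed into the term $\overline{N}(\epsilon)/N$. Since the hypothesis $|x_n-y_n|\le \epsilon_n$ is symmetric in the two sequences, it suffices to prove the one-sided bound $D_N(X) \le D_N(Y) + 2\epsilon + \overline{N}(\epsilon)/N$ and then interchange the roles of $X$ and $Y$.

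First I would fix an arbitrary subinterval $J = [\alpha,\beta) \subseteq [0,1)$ and compare the counting function $A_N(J,X)$ with the counting functions of $Y$ on the dilated and eroded intervals $J^+ = [\alpha-\epsilon,\beta+\epsilon) \cap [0,1)$ and $J^- = [\alpha+\epsilon,\beta-\epsilon)$. For any index $n$ with $\epsilon_n \le \epsilon$ we have $|x_n-y_n|\le \epsilon$, so $x_n \in J \Rightarrow y_n \in J^+$ and $y_n \in J^- \Rightarrow x_n \in J$. Splitting the indices into those with $\epsilon_n \le \epsilon$ and the at most $\overline{N}(\epsilon)$ indices with $\epsilon_n > \epsilon$ yields
\[
A_N(J^-, Y) - \overline{N}(\epsilon) \le A_N(J, X) \le A_N(J^+, Y) + \overline{N}(\epsilon).
\]
Next I would invoke the definition of $D_N(Y)$, which gives $A_N(I,Y)/N \le \lambda(I) + D_N(Y)$ and $A_N(I,Y)/N \ge \lambda(I) - D_N(Y)$ for every subinterval $I$. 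Using $\lambda(J^+) \le (\beta-\alpha)+2\epsilon$ and $\lambda(J^-) \ge (\beta-\alpha)-2\epsilon$, dividing by $N$ and subtracting $(\beta-\alpha)$ produces
\[
\left| \frac{A_N(J, X)}{N} - (\beta - \alpha) \right| \le 2\epsilon + D_N(Y) + \frac{\overline{N}(\epsilon)}{N}.
\]
Taking the supremum over all $J=[\alpha,\beta)$ gives the desired one-sided bound, and the symmetric argument completes the proof.

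The only points requiring care are degeneracies, not genuine obstacles. When $\beta-\alpha \le 2\epsilon$ the eroded interval $J^-$ is empty, but then the lower estimate $A_N(J,X)/N - (\beta-\alpha) \ge -(\beta-\alpha) \ge -2\epsilon$ holds trivially from $A_N \ge 0$, so no $D_N(Y)$ term is needed there. Likewise, clamping $J^+$ to $[0,1)$ only shortens it, so the bound $\lambda(J^+) \le (\beta-\alpha)+2\epsilon$ is preserved and $J^+$ remains an admissible interval in the supremum defining $D_N(Y)$. As this is the standard estimate quoted from \cite{KuN}, this argument suffices and nothing deeper is involved.
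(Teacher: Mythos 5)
Your proof is correct; the paper itself gives no proof of this lemma, quoting it verbatim from \cite{KuN}, and your dilation/erosion argument --- comparing $A_N(J,X)$ with $A_N(J^{+},Y)$ and $A_N(J^{-},Y)$ and absorbing the at most $\overline{N}(\epsilon)$ exceptional indices --- is precisely the standard proof in that reference. The degenerate cases you flag (empty $J^{-}$ when $\beta-\alpha\leq 2\epsilon$, clamping $J^{+}$ to $[0,1)$) are handled correctly, so nothing is missing.
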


\section{Results}

We will compute the Hausdorff dimension of   difference sets formed by taking unions or intersections of the sets $\NQ$, $\RNQ$, and $\DNQ$.  Every other similar result will follow as a corollary of one of these theorems, by using similar techniques, or by \reff{figure1}.
%\begin{thrm}\labt{RDN}
%If $Q$ is infinite in limit, then $\dimh{\RDN}=1$.
%\end{thrm}
\begin{proof}[Proof of \reft{RDN}]
Let $P = (p_i)$ with $p_i = \floor{\log i}+2$ and $\xi \in \N{P}$ with $\xi = .F_1 F_2\cdots \wrt{P}$. Fix a sequence $X = (x_n)$ that is uniformly distributed modulo 1. Define the sequences
\begin{align*}
&\nu_n = \min\br{t : \frac{\sum_{i=0}^{n-1} \log q_{I(n-1)+i}}{\sum_{i=0}^{j-I(n-1)-1} \log q_{I(n-1)+i}} < \frac{1}{n}, \forall j\geq t};\\
&\upsilon_{n,k} = \min \br{t : \frac{Q_n^{(k)}}{\sum_{i=1}^j P_{i-k+1}^{(k)} } < \frac{1}{n}, \forall j \geq t }; \\
&L_0 = 0; \\
&L_n = \max \br{ \min\br{t : \log(q_j) > n, \forall j \geq t}, L_{n-1} + n^2, L_{n-1}+\nu_n, \max_{k \leq n} \br{\upsilon_{n,k}}}
\end{align*}
and set $i(n) = \max\br{j : L_j \leq n}$.
Note that $\nu_n$ and $\upsilon_{n,k}$ exist since $Q$ is infinite in limit and $P$ is fully divergent.
Define the set 
$$
S = \bigcup_{n=1}^\infty \{L_n, L_n+1, \cdots, L_n+n-1\}.
$$
Note that this set has density $0$ since 
$$
\frac{\sum_{i=1}^n i}{\sum_{i=1}^n i + t_i} \leq \frac{\sum_{i=1}^n i}{\sum_{i=1}^n i+i^2} \to 0 \hbox{ as $n$ goes to infinity.}
$$
%and the sets
%$$
%S = \{1, 2, \cdots, s_1\} \cup \{s_1+t_1+1, \cdots, s_1+t_1+s_2\} \cup \{s_1+t_1+s_2+t_2+1, \cdots, s_1+t_1+s_2+t_2+s_3\} \cdots;
%$$
%$$
%T = \mathbb{N} \backslash S.
%$$
Define the intervals
$$
V(n) = 
\begin{cases}
[F_{n-L_i}, F_{n-L_i}+1) &\text{if } n \in [L_i, L_i+1, \cdots, L_i+i]\\
[x_n q_n - \omega_n, x_n q_n + \omega_n) \cap [\ceil{ \log i(n)}, q_{n}-1] & \text{else}  
\end{cases}
$$
where
$$
\omega_n = q_n^{1-\epsilon_i} \hbox{ and } \epsilon_i = \frac{\min\br{\log q_1 \cdots q_{i-1}, \log q_i }^{1/2}}{\log q_i}
$$
Set
$
\Lambda_Q = \{ x = .E_1 E_2 \wrt{Q} : E_n \in V(n)\}.
$
We claim that $\Lambda_Q \subseteq \RDN$ and $\dimh{\Lambda_Q} = 1$.
Let $x \in \Lambda_Q$ and let $B$ be a block of length $k$. 
Note that by the definition of $L_n$, there are only finitely many values $n \in \mathbb{N}\backslash S$ such that $B$ occurs at position $n$ in the $Q$-Cantor series expansion of $x$. 
This is because all digits $E_n$ with $n \in \mathbb{N} \backslash S$ must be greater than $\ceil{\log i(n)}$ by the definition of $V(n)$ and since $i(n)$ tends to infinity as $n$ does. 
Thus, if $m$ is the maximum digit for the block $B$, we have that for $n \in \mathbb{N} \backslash S$ with $i(n) > m$, that $E_n > m$. Thus 
$
N_n^Q(B,x) = \sum_{i=1}^{i(n)} N_{i-k+1}^P(B, \xi) + O(1).
$
So for any two blocks $B_1$ and $B_2$ of length $k$, we have
\begin{align*}
\lim_{n \to \infty} \frac{N_n^Q(B_1,x)}{N_n^Q(B_2, x)} &= \lim_{n \to \infty} \frac{\sum_{i=1}^{i(n)} N_{i-k+1}^P(B_1, \xi) + O(1)}{\sum_{i=1}^{i(n)} N_{i-k+1}^P(B_2, \xi) + O(1)} \\
&= \lim_{n \to \infty} \frac{N_{n-k+1}^P(B_1, \xi)}{N_{n-k+1}^P(B_2, \xi)} = 1.
\end{align*}
Thus $x \in \RN{Q}$.

Consider the sequence $Y = \pr{\frac{E_n}{q_n}}$. For $n \in \mathbb{N} \backslash S$, we have $\left |\frac{E_n}{q_n} - x_n \right | < \frac{\omega_n}{q_n}$, which tends to $0$ as $n$ goes to infinity. We therefore have for $\epsilon>0$ that $\overline{N}(\epsilon) = O(1) + \# S \cap \{1, \cdots, N\}$. Thus by \refl{DiscKuN}
$$
\left |D_N(X) - D_N(Y) \right | < 2 \epsilon + \frac{O(1)}{N} + \frac{\# S \cap \{1, \cdots, N\}}{N} \to 2 \epsilon
$$
as $N$ tends to infinity. Since the inequality holds for all $\epsilon>0$, we have that $\pr{\frac{E_n}{q_n}}$ is uniformly distributed mod 1. Thus $x \in \DN{Q}$.

Note that
$$
\lim_{n \to \infty} \frac{N_n^Q(B,x)}{\sum_{i=1}^{i(n)} P_{i-k+1}^{(k)} } = 1.
$$
However, 
$$
\lim_{n \to \infty} \frac{Q_n^{(k)}}{\sum_{i=1}^{i(n)} P_{i-k+1}^{(k)} } = 0
$$
by the definition of $L_n$, so $x \not \in \N{Q}$.
Thus $\Lambda_Q \subseteq \RDN$.

Evidently $\Lambda_Q$ is a homogeneous Moran set with $n_k =|V(k)|$ and $c_k = \frac{1}{q_k}$. Thus
\begin{align*}
\dimh{\Lambda_Q} &\geq \liminf_{k \to \infty} \frac{\log n_1 \cdots n_k}{- \log c_1 \cdots c_{k+1} n_{k+1}} \\
& = \liminf_{n \to \infty} \frac{\sum_{i=1}^k \chi_{\mathbb{N}\backslash S}(i) \pr{1- \epsilon_i} \log q_i}{\sum_{i=1}^k \log q_i + \log q_{k+1}}\\
&= \liminf_{n \to \infty} \pr{1 - \frac{\sum_{j=1}^{i(n)} \sum_{k=0}^{j-1} \log q_{L_j+k}}{\sum_{j=1}^{i(n)} \sum_{k=0}^{i(j)-i(j-1)} \log q_{L_j+k}} }\\
&= \liminf_{n \to \infty} \pr{1 - \frac{\sum_{i=0}^{n-1} \log q_{L_n+i}}{\sum_{i=0}^{L_n-L_{n-1}} \log q_{L_n+i}}}= 1
\end{align*}
by the definition of $L_n$. Thus 
$$
\dimh{\Lambda_Q} =1 \hbox{ and } \dimh{\RDN}=~1.
$$
\end{proof}

\begin{cor}
If $Q$ is infinite in limit, then $\dimh{\RNQ}=1$.
\end{cor}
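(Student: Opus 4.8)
The plan is to obtain this as an immediate consequence of \reft{RDN} via monotonicity of Hausdorff dimension. First I would observe that the set appearing in \reft{RDN}, namely $\RDN$, which expands to $\RNQ \cap \DNQ \backslash \NQ$, is a subset of $\RNQ$: intersecting with $\DNQ$ and then deleting the points of $\NQ$ can only shrink $\RNQ$. Since Hausdorff dimension is monotone under set inclusion, this containment yields
$$
\dimh{\RNQ} \geq \dimh{\RNQ \cap \DNQ \backslash \NQ} = \dimh{\RDN},
$$
and \reft{RDN} evaluates the right-hand side to equal $1$ for every $Q$ that is infinite in limit.

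For the matching upper bound I would invoke the elementary normalization property of Hausdorff dimension: every subset of $\mathbb{R}$ has Hausdorff dimension at most $1$. As $\RNQ \subseteq \mathbb{R}$, this gives $\dimh{\RNQ} \leq 1$, and combining the two inequalities produces $\dimh{\RNQ} = 1$, as claimed. I do not anticipate any genuine obstacle here, since the entire analytic content has already been supplied by the homogeneous Moran set construction in the proof of \reft{RDN}; the corollary is a one-line deduction from set inclusion together with the monotonicity and normalization of $\dim_{\mathrm{H}}$.

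It is worth emphasizing what makes the statement interesting rather than what makes it hard: when $Q$ is infinite in limit but not fully divergent, one has $\lmeas{\RNQ}=0$, so $\RNQ$ can be a Lebesgue-null set; the corollary nonetheless certifies that it is as large as possible from the metric standpoint of dimension. This is precisely the phenomenon that \reft{RDN} was engineered to exhibit, and the passage to $\RNQ$ merely discards the auxiliary conditions $x \in \DNQ$ and $x \notin \NQ$ that were needed only to pin down membership in the smaller difference set.
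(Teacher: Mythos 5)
Your proposal is correct and matches the paper's (implicit) reasoning exactly: the corollary is deduced from \reft{RDN} by noting $\RDN \subseteq \RNQ$, applying monotonicity of Hausdorff dimension, and using the trivial upper bound $\dimh{\RNQ} \leq 1$ for subsets of $\mathbb{R}$. The paper offers no separate proof precisely because this one-line deduction is all that is needed.
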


\begin{thrm}
If $Q$ is infinite in limit, then 
$$
\dimh{\RN{Q} \backslash \pr{\bigcup_{j=1}^\infty \Nk{Q}{j} \cup \DN{Q}}} =1.
$$
\end{thrm}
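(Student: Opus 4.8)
The plan is to reuse the construction $\Lambda_Q$ from the proof of \reft{RDN} essentially unchanged, modifying only the digits chosen at the ``bulk'' positions $n\in\mathbb{N}\backslash S$. In that proof the sparse (density-zero) set $S$ carries digits copied from a $P$-normal number $\xi$, which is what forces $Q$-ratio normality and, because $Q_n^{(k)}/\sum_{i=1}^{i(n)}P_{i-k+1}^{(k)}\to0$, also forces $N_n^Q(B,x)/Q_n^{(k)}\to\infty$ for every $k$; the bulk positions only served to supply ``large'' digits that were steered toward a uniformly distributed sequence in order to \emph{obtain} distribution normality. Here I instead steer the bulk digits so as to \emph{destroy} distribution normality, while keeping them large enough that the block-counting analysis is untouched. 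The key point is that the three requirements are controlled by disjoint features of the expansion: relative block frequencies (the copied digits on $S$), the absolute counting rate (the sparsity of $S$), and the values $E_n/q_n$ (the bulk digits); since $S$ has density zero, altering the bulk digits affects only the last.

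Concretely, keep $P=(p_i)$, $\xi=.F_1F_2\cdots\wrt{P}$, the sequence $(L_n)$, the function $i(n)$, and the set $S$ exactly as in \reft{RDN}, and redefine
$$
V(n)=
\begin{cases}
[F_{n-L_i},F_{n-L_i}+1) & \text{if } n\in\{L_i,L_i+1,\cdots,L_i+i\}, \\
\left[\ceil{\log i(n)},\floor{q_n/2}\right) & \text{otherwise,}
\end{cases}
$$
and set $\Lambda_Q=\br{x=.E_1E_2\cdots\wrt{Q}:E_n\in V(n)}$. For $n\notin S$ every admissible digit satisfies $\ceil{\log i(n)}\le E_n<q_n/2$, so $E_n\to\infty$ while $E_n/q_n\in[0,1/2)$; for $n\in S$ one has $E_n=F_{n-L_i}<\floor{\log i}+2$, so $E_n/q_n\in[0,1/2)$ as well for all large $n$.

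The three membership assertions now follow. Ratio normality and the failure of $\Nk{Q}{j}$ for every $j$ use only that the bulk digits exceed any fixed bound eventually (true since $\ceil{\log i(n)}\to\infty$): for each fixed block $B$, no occurrence of $B$ can meet a bulk position once $i(n)$ exceeds the largest entry of $B$, so block occurrences remain governed by the copied $P$-normal digits, giving $N_n^Q(B,x)=\sum_{i=1}^{i(n)}N_{i-k+1}^P(B,\xi)+O(1)$ exactly as in \reft{RDN}; hence $x\in\RN{Q}$ and $N_n^Q(B,x)/Q_n^{(k)}\to\infty$ for all $k$, so $x\notin\bigcup_j\Nk{Q}{j}$. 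For $x\notin\DN{Q}$, recall $T_{Q,n}(x)=E_{n+1}/q_{n+1}+O(1/q_{n+1})$ and compare $(T_{Q,n}(x))$ with $Y=(E_{n+1}/q_{n+1})$ through \refl{DiscKuN} with $\epsilon_n=1/q_{n+1}\to0$. Since every term of $Y$ lies in $[0,1/2)$, we have $A_N([1/2,1),Y)/N=0$, so $D_N(Y)\ge 1/2$; \refl{DiscKuN} then forces $\liminf_N D_N(T_{Q,\cdot}(x))\ge 1/2-2\epsilon>0$ for small $\epsilon$, so the orbit is not uniformly distributed.

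Finally, $\Lambda_Q$ is again a homogeneous Moran set with $c_k=1/q_k$ and $n_k=|V(k)|$, where $n_k=1$ on $S$ and $n_k=\floor{q_k/2}-\ceil{\log i(k)}$ off $S$. Since $\log n_k=\log q_k-\log 2+o(1)$ on the bulk, the lower-bound estimate of \reft{RDN} goes through with $(1-\epsilon_i)\log q_i$ replaced by $\log q_i-\log 2$: the additive $\log2$ contributes $O(k)=o\!\left(\sum_{i\le k}\log q_i\right)$ because $q_i\to\infty$, and the positions of $S$ again make a vanishing relative contribution by the defining property of $(L_n)$. Hence $\dimh{\Lambda_Q}=1$, and since $\Lambda_Q\subseteq\RN{Q}\backslash\pr{\bigcup_j\Nk{Q}{j}\cup\DN{Q}}$, the theorem follows. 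The step requiring the most care is precisely this last one: one must check that shrinking the admissible bulk range to a proper subinterval---forced on us in order to kill distribution normality---still leaves enough digit freedom to retain full dimension, i.e. that $\log(q_k/2)/\log q_k\to1$ and that the sparse copying set stays negligible, exactly as engineered in \reft{RDN}.
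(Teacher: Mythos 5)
Correct, and essentially the paper's own argument: the paper proves this by rerunning the construction of \reft{RDN} verbatim except that the target sequence $X=(x_n)$ is taken to be non--uniformly distributed mod $1$, which is exactly your plan of keeping the sparse $P$-normal copying set $S$ (forcing ratio normality and, via the sparsity conditions built into $(L_n)$, the failure of every $\Nk{Q}{j}$) while perturbing only the bulk digits so that $(E_n/q_n)$ fails to be uniformly distributed. Your one deviation --- replacing the shrinking windows $[x_nq_n-\omega_n,x_nq_n+\omega_n)$ by the half-range $[\ceil{\log i(n)},\floor{q_n/2})$ --- is harmless and if anything cleaner, since then $\log n_k=\log q_k-\log 2+o(1)$ makes the Feng--Wen--Wu lower bound immediate without the $\epsilon_i$ bookkeeping, and the discrepancy bound $D_N(Y)\geq 1/2$ kills distribution normality just as the non-u.d.\ target does in the paper.
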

\begin{proof}
The proof is the same as \reft{RDN}, but with $X = (x_n)$ a sequence that is not uniformly distributed mod 1.
\end{proof}
\begin{thrm}\labt{DNRN}
If $Q$ is infinite in limit, then
$$
\dimh{\DNQ \backslash \bigcup_{j=1}^\infty \RNk{Q}{j}}=1.
$$
\end{thrm}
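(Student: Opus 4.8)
The plan is to run the construction from the proof of \reft{RDN} essentially verbatim, changing only the digit pattern that is deposited on the sparse set $S$. Recall that in \reft{RDN} one copies a $P$-normal (hence $P$-ratio normal) number $\xi$ onto the density-zero set $S$, which forces the resulting $x$ to be $Q$-ratio normal, while off $S$ the digits are chosen near $x_n q_n$ and larger than $\ceil{\log i(n)}$ so as to make $x$ distribution normal but not $Q$-normal. Here I want the reverse behaviour on $S$: rather than copying a ratio normal pattern, I copy one whose block counts are so lopsided that ratio normality is destroyed at \emph{every} order. The cleanest choice is to set $E_n = 0$ for all $n \in S$ (equivalently $F_i \equiv 0$); off $S$ I keep exactly the prescription $V(n) = [x_n q_n - \omega_n, x_n q_n + \omega_n) \cap [\ceil{\log i(n)}, q_n - 1]$ with the same $X$, $L_n$, $i(n)$, $\nu_n$, $\epsilon_i$ and $\omega_n$ as before, and I let $\Lambda_Q$ be the resulting homogeneous Moran set.

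Two of the three conclusions are then inherited with no change. Membership $\Lambda_Q \subseteq \DNQ$ depends only on the facts that $\abs{E_n/q_n - x_n} < \omega_n/q_n \to 0$ for $n \in \mathbb{N}\backslash S$ and that $\#(S \cap \br{1,\dots,N})/N \to 0$; since the actual digit values assigned on $S$ never enter the discrepancy comparison, \refl{DiscKuN} gives distribution normality exactly as in \reft{RDN}. Likewise the lower bound $\dimh{\Lambda_Q} = 1$ is the identical Moran computation: on $S$ we have $n_k = 1$ (a single forced digit, whether it is $0$ or a digit of $\xi$), and off $S$ we have $n_k \asymp q_k^{1-\epsilon_k}$, so
\[
\dimh{\Lambda_Q} \;\geq\; \liminf_{k \to \infty} \frac{\sum_{i=1}^k \chi_{\mathbb{N}\backslash S}(i)\,\pr{1-\epsilon_i}\log q_i}{\sum_{i=1}^k \log q_i + \log q_{k+1}} \;=\; 1,
\]
where the limit is forced by the choice of $L_n$ (the $\nu_n$ condition makes the $S$-contribution to $\sum \log q_i$ negligible, and $\epsilon_i \to 0$ because $Q$ is infinite in limit).

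The genuinely new content is the verification that $x \notin \RNk{Q}{j}$ for \emph{every} $j$, i.e. $x \notin \bigcup_{j=1}^\infty \RNk{Q}{j}$. Fix $j \geq 1$ and take the two length-$j$ blocks $B_1 = (0,0,\dots,0)$ and $B_2 = (1,0,\dots,0)$. The structural fact already exploited in \reft{RDN} is that for $n \in \mathbb{N}\backslash S$ one has $E_n \geq \ceil{\log i(n)} \to \infty$, so any occurrence of a block with a bounded entry is eventually confined to the chunks of $S$. On those chunks all digits are $0$, and distinct chunks are separated by off-$S$ gaps of growing length, so $B_1$ occurs $(m-j+1)$ times inside the length-$m$ chunk and nowhere else; hence $N_n^Q(B_1,x) \to \infty$. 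The block $B_2$ requires a digit equal to $1$, which never appears on $S$ and, off $S$, is eventually exceeded by $E_n \geq \ceil{\log i(n)}$; thus $N_n^Q(B_2,x)$ is bounded (indeed eventually $0$). Consequently the limit defining $Q$-ratio normality of order $j$ cannot equal $1$, so $x \notin \RNk{Q}{j}$. Since $j$ was arbitrary, every $x \in \Lambda_Q$ lies in $\DNQ \backslash \bigcup_{j=1}^\infty \RNk{Q}{j}$, which together with $\dimh{\Lambda_Q} = 1$ gives the theorem. (If one prefers both comparison blocks to occur infinitely often rather than invoking a vanishing denominator, one copies onto $S$ instead a number $\xi \notin \bigcup_k \RNk{P}{k}$ with every block occurring, e.g. a generic point for a non-uniform Bernoulli measure on a finite digit alphabet; the argument is unchanged via \refl{tcorr}.)

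The step I expect to demand the most care is the interplay among the three requirements rather than any one in isolation: distribution normality forces equidistribution of the digits at scale $q_n$, which superficially conflicts with the skewed block frequencies needed to kill ratio normality. The construction resolves this tension by confining all of the skew to the density-zero set $S$, which is invisible both to the discrepancy estimate of \refl{DiscKuN} and to the dimension quotient, yet—precisely because the off-$S$ digits are forced to grow without bound—is the \emph{only} place where bounded-digit blocks can occur. The sole technical bookkeeping is to check that straddling occurrences across chunk boundaries contribute nothing to $N_n^Q(B_1,x)$ (an all-zero block cannot extend into the large off-$S$ digits) and that the finitely many $n$ for which $[x_n q_n - \omega_n, x_n q_n + \omega_n) \cap [\ceil{\log i(n)}, q_n-1]$ degenerates do not affect any limit; both points are identical to the corresponding verifications in \reft{RDN}.
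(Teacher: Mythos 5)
Your proposal is correct and is essentially the paper's own argument: the paper likewise reuses the \reft{RDN} construction wholesale and changes only the digit pattern deposited on the density-zero set $S$, choosing $\xi$ so that the digit $0$ \emph{never} occurs there (making every block that contains a $0$ have bounded count), whereas you force every digit on $S$ to equal $0$ (making the all-zero block have unbounded count while any block containing a $1$ has bounded count). The two choices are dual and defeat $Q$-ratio normality of every order for the same reason; your version has the minor advantage of explicitly exhibiting, for each length $j$, a block whose count tends to infinity rather than relying on a vanishing or bounded denominator alone.
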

\begin{proof}
The proof is the same as \reft{RDN}, but we choose $\xi = .E_1 E_2 \cdots \wrt{P}$ such that the digit $0$ never occurs.
\end{proof}
We will need to refer to the following four conditions.
\begin{align} \labeq{NQ}
&\lim_{n \to \infty} \frac{t_n \alpha_n^k}{s_n \beta_n^k} = 0;\\
\labeq{NotNQ}
&\lim_{n \to \infty} \frac{t_n \alpha_n^k}{s_n \beta_n^k} > 0;\\
%\begin{equation}\labeq{NkQ}
%\text{For } l \geq k, \\
%\lim_{n \to \infty} \frac{t_n \alpha_n^l}{s_n \beta_n^l} = 0.
%\text{For } l < k, \lim_{n \to \infty} \frac{t_n \alpha_n^l}{s_n \beta_n^l} > 0.
%\end{equation}
\labeq{RNQ}
&\lim_{n \to \infty} \frac{\alpha_n^k}{s_n} = 0;\\
\labeq{DNQ}
&\lim_{n \to \infty} \frac{\sum_{i=1}^n \upsilon_i s_i }{\sum_{i=1}^n \upsilon_i (s_i+t_i)} = 0.
\end{align}

\iffalse
$\mathscr{A} = \{ Q : Q = Q(\alpha, \beta, s, t, \upsilon) \refeq{HDT1} \refeq{HDT2} \refeq{NQ} \refeq{RNQ}\}$
$\mathscr{A}_k = \{ Q: Q = Q(\alpha, \beta, s, t, \upsilon) \refeq{HDT1} \refeq{HDT2} \refeq{NkQ} \refeq{RNQ} \}$
$\mathscr{B}  = \{ Q : Q = Q(\alpha, \beta, s, t, \upsilon) \text{ with } \refeq{HDT1} \refeq{HDT2} \refeq{RNQ}, \refeq{NotNQ}$
$\mathscr{C} = \{Q : Q = Q(\alpha, \beta, s, t, \upsilon) \text{ with } \refeq{HDT1} \refeq{HDT2} \refeq{RNQ} \refeq{NotNQ} \refeq{DNQ} \}$
\fi

\begin{thrm}\labt{NDN}
Suppose that $Q=Q(\alpha, \beta, s, t, \upsilon)$ is infinite in limit,  $k$-divergent (resp. fully divergent), and satisfies conditions \refeq{HDT1}, \refeq{HDT2}, \refeq{NQ} for all $k$, \refeq{RNQ}, and \refeq{DNQ}.  If $\alpha_i=o(\beta_i)$, then 
$$
\dimh{\bigcap_{j=1}^k \Nk{Q}{j} \backslash \DNQ}=1 \pr{ \hbox{resp. } \dimh{\NQ \backslash \DNQ}=1 }.
$$
\end{thrm}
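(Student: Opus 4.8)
The plan is to exhibit a set of the form $\Theta = \Theta(\alpha,\beta,s,t,\upsilon,F,I)$ that is contained in $\bigcap_{j=1}^k \Nk{Q}{j} \backslash \DNQ$ (resp. $\NQ \backslash \DNQ$) and has full Hausdorff dimension, so that \refl{HDT} applies directly. The guiding idea is that the two properties being imposed pull in opposite directions and can be carried by disjoint families of coordinates: the small-base ($\alpha$) coordinates will carry the normality, while the large-base ($\beta$) coordinates will carry both the dimension and the failure of distribution normality. Concretely, let $P$ be the subsequence of $Q$ consisting of the $\alpha$-bases, i.e. the basic sequence $\alpha_1^{\upsilon_1 s_1}\alpha_2^{\upsilon_2 s_2}\cdots$. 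Since $Q$ is infinite in limit and $k$-divergent (resp. fully divergent), condition \refeq{NQ} (together with \refl{tcorr}) shows that $\qnk$ is asymptotic to its $\alpha$-part $\sum \upsilon_i s_i/\alpha_i^k$; hence $P$ is itself infinite in limit and $k$-divergent (resp. fully divergent), so by \reft{measure} we may fix $\xi = .F_1 F_2\cdots \wrtP \in \bigcap_{j=1}^k \Nk{P}{j}$ (resp. $\xi \in \N{P}$). We take $F$ to be the digit sequence of $\xi$ and we take the high band $I_i = \{\ceil{\beta_i/2}, \ceil{\beta_i/2}+1, \ldots, \beta_i-1\}$.

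Filling the $\alpha$-coordinates of each $x \in \Theta$ with the digits of the $P$-normal number $\xi$ forces $Q$-normality, and this is the main step. Fix a block $B$ of length $j \leq k$. Because $\alpha_i = o(\beta_i)$, for all large $i$ the band $I_i$ lies above $[0,\alpha_i)$, so $B$ cannot occur at any window meeting a $\beta$-coordinate once $\max B < \ceil{\beta_i/2}$; hence every occurrence of $B$, apart from $O(1)$ many, lies inside a single maximal run of $\alpha_i$-coordinates. The windows of length $j$ straddling the ends of such runs number $O(\upsilon_i)$ per super-block, which is $o(\upsilon_i s_i/\alpha_i^j)$ by \refeq{RNQ}; summing with \refl{tcorr}, these contribute only $o(Q_n^{(j)})$. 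The remaining windows are genuine length-$j$ windows of $\xi$, so $N_n^Q(B,x)$ agrees, up to $o(Q_n^{(j)})$, with $N_{m(n)}^P(B,\xi)$, where $m(n)$ is the number of $\alpha$-coordinates among the first $n$ positions (tracked by the bookkeeping functions $i(n)$, $G$, $g$). Since $\xi$ is $P$-normal of order $j$ and $P_{m(n)}^{(j)} \sim Q_n^{(j)}$ by \refeq{NQ}, we obtain $N_n^Q(B,x)/Q_n^{(j)} \to 1$. As $B$ was arbitrary this gives $x \in \bigcap_{j=1}^k \Nk{Q}{j}$ (resp. $x \in \NQ$), for every $x \in \Theta$ and independently of the free choices in the $\beta$-coordinates.

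Failure of distribution normality and fullness of dimension both come from the $\beta$-coordinates. By \refeq{DNQ} the $\alpha$-coordinates have density $0$, so the sequence $(E_n/q_n)$ is, up to a density-$0$ set of indices, confined to $[1/2,1)$, since $E_n/\beta_i \in [1/2,1)$ whenever $n$ is a $\beta$-coordinate; hence $(E_n/q_n)$ is not uniformly distributed mod $1$. Because $|T_{Q,n}(x) - E_{n+1}/q_{n+1}| < 1/q_{n+1} \to 0$ and $q_n \to \infty$, \refl{DiscKuN} (applied with $\e_n = 1/q_{n+1}$ and letting $\e \to 0$) transfers this to $(T_{Q,n}(x))$, so $x \notin \DNQ$. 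Finally $|I_n| = \beta_n - \ceil{\beta_n/2} \sim \beta_n/2$, whence $\gamma = \lim_{n\to\infty} \log|I_n|/\log\beta_n = 1$; since $Q$ satisfies \refeq{HDT1} and \refeq{HDT2}, \refl{HDT} yields $\dimh{\Theta} = 1$. Thus the target difference set contains the full-dimensional set $\Theta$, and the theorem follows.

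I expect the principal obstacle to be the normality transfer in the second paragraph. One must show simultaneously that \refeq{NQ} renders the $\beta$-coordinates asymptotically invisible to the order-$j$ block statistics, that \refeq{RNQ} makes the runs of $\alpha$-coordinates long enough that the straddling windows and the partial final super-block at index $n$ do not disturb the limit, and that all of these errors remain $o(Q_n^{(j)})$ uniformly as $i(n)$ advances. Carrying out the bookkeeping that matches $Q$-coordinates to $P$-digit indices so that $P_{m(n)}^{(j)} \sim Q_n^{(j)}$, and verifying this for every block $B$ and every order $j \leq k$ at once, is where the care is needed; the hypothesis $\alpha_i = o(\beta_i)$ enters precisely to keep the high band of $\beta$-digits disjoint from the digits available in the $\alpha$-runs, so that the normality mechanism and the distribution-breaking mechanism do not interfere.
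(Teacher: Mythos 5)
Your proposal follows essentially the same route as the paper: the same auxiliary basic sequence $P=[\alpha_1]^{s_1\upsilon_1}[\alpha_2]^{s_2\upsilon_2}\cdots$ with a $P$-normal $\xi$ supplying the fixed digits $F$, the same Moran set $\Theta(\alpha,\beta,s,t,\upsilon,F,I)$ analyzed via \refl{HDT} and \refl{tcorr}, the same use of \refeq{NQ} and \refeq{RNQ} to control the block counts and of \refeq{DNQ} to kill distribution normality. The only difference is cosmetic: you take $I_i$ to be the upper half of the digit range so that $E_n/q_n$ clusters in $[1/2,1)$, whereas the paper takes $I_i=\{\alpha_i,\ldots,\floor{\beta_i^{1-(1/\log\beta_i)^{1/2}}}+1\}$ so that $E_n/q_n\to 0$; both choices give $\log|I_n|/\log\beta_n\to 1$ and defeat uniform distribution.
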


\begin{proof}
We will prove the statement for when $Q=(q_n)$ is fully divergent. The proof for when $Q$ is $k$-divergent follows similarly.
Define the basic sequence $P$ by
$$
P=[\alpha_1]^{s_1\upsilon_1}[\alpha_2]^{s_2\upsilon_2}[\alpha_3]^{s_3\upsilon_3}[\alpha_4]^{s_4\upsilon_4}\cdots.
$$
We note that $P$ is fully divergent since $Q$ is fully divergent.
By \reft{measure}, there exists a real number $\xi=E_0.E_1E_2\cdots\wrt{P}$ that is an element of $\N{P}$.  Set 
$$
I_i = \br{ \alpha_i, \alpha_i+1, \cdots, \floor{\beta_i^{1-(1/\log \beta_i)^{1/2}}} + 1}
$$
and $F_i=E_i$. Note that $\lim_{n \to \infty} \log |I_n|/\log \beta_n=1$, so $\dimh{\Theta(\alpha, \beta, s, t, \upsilon, F, I)}=1$ by \refl{HDT}.  We now wish to show that
$$
\Theta(\alpha, \beta, s, t, \upsilon, F, I) \subseteq  \NQ \backslash \DNQ.
$$
Let $k$ and $n$ be natural numbers, $B$ be a block of length $k$, and $x \in \Theta(\alpha, \beta, s, t, 
\upsilon, F, I)$.  We wish to show that 
$$
N_{g(n)}^P(B, \xi) - k C_\alpha(g(n)) \leq N_n^Q(B,x) \leq N_{g(n)}^P(B,\xi)+O(1).
$$
Let $m$ be the 
maximum digit in the block $B$.  Since $\min I_i \to \infty$, we know that there are only finitely many indices $i$ such that 
$m> \min I_i$.  Thus, there are at most finitely many occurrences of $B$ starting at position $n$ when 
$q_n=\beta_{i(n)}$. If every occurrence of $B$ in $\xi$ occurs at the corresponding place in $x$, then we have 
$$
N_{g(n)}^P(B, \xi)  + O(1) = N_n^Q(B,x).
$$ 
If some of the occurrences of $B$ in $\xi$ do not occur in the corresponding places in $x$, then we have $N_n^Q(B,x) \leq N_{g(n)}^P(B, \xi)$.

On the other hand, the total number of places up to position $n$ where $B$ can occur in the $P$-Cantor series expansion of $\xi$ but $B$ does not occur in the corresponding positions in the $Q$-Cantor series expansion of $x$ is at most $k C_\alpha(n)$, the total length of the  last $k$ terms of the substrings $[\alpha_i]^{s_i}$ of $P$. Thus 
$$
N_{g(n)}^P(B,\xi) - k C_\alpha(g(n)) \leq N_n^Q(B, x) \leq N_{g(n)}^P (B, \xi) + O(1).
$$

Many of the following calculations use \refl{tcorr}. Note that 
$$
\pnk = \sum_{j = 1}^{i_\alpha(n)-1} \frac{s_j \upsilon_j }{\alpha_j^k} + \frac{s_{i(n)} b_\alpha(n)}{\alpha_{i_\alpha(n)}^k}
$$
and 
\begin{align*}
\qnk &= \pr{\sum_{j =1}^{i(n)-1} \frac{(s_j - k) \upsilon_j}{\alpha_j^k} +  \frac{(t_j-k)\upsilon_j}{\beta_j^k} + \pr{\sum_{l=1}^{k-1} \frac{\upsilon_j}{\beta_j^l \alpha_j^{k-l}} + \frac{\upsilon_j}{\alpha_j^l \beta_j^{k-l}}  }} \\
&+ \frac{c(n) (s_{i(n)} - k)}{\alpha_{i(n)}^k} + \frac{c(n)(t_{i(n)} - k)}{\beta_{i(n)}^k} + \pr{\sum_{l=1}^{k-1} \frac{\upsilon_{i(n)}}{\beta_{i(n)}^l \alpha_{i(n)}^{k-l}} + \frac{\upsilon_j}{\alpha_{i(n)}^l \beta_{i(n)}^{k-l}}}.
\end{align*}

Note that by \refeq{HDT1} and \refeq{HDT2}, we have that
$$
\lim_{n \to \infty} \frac{\qnk}{\pr{\sum_{j =1}^{i(n)-1} \frac{(s_j-k) \upsilon_j}{\alpha_j^k} +  \frac{(t_j-k)\upsilon_j}{\beta_j^k}} + \frac{c(n) (s_{i(n)} - k)}{\alpha_{i(n)}^k}+ \frac{c(n)(t_{i(n)} - k)}{\beta_{i(n)}^k}} = 1.
$$

Thus
\begin{align*}
\lim_{ n \to \infty} \frac{\qnk}{P_{g(n)}^{(k)}} &
= \lim_{n \to \infty} \frac{\pr{\sum_{j =1}^{i(n)-1} \frac{(s_j-k) \upsilon_j}{\alpha_j^k} +  \frac{(t_j-k)\upsilon_j}{\beta_j^k}} + \frac{c(n) (s_{i(n)} - k)}{\alpha_{i(n)}^k}+ \frac{c(n)(t_{i(n)} - k)}{\beta_{i(n)}^k}}{\pr{\sum_{j=1}^{i(n)-1} \frac{s_j \upsilon_j}{\alpha_j^k}} + \frac{c(n) s_{i(n)}}{\alpha_{i(n)}^k}} \\
& = \lim_{n \to \infty} \frac{s_n - k}{s_n}  + \frac{(t_n-k) \alpha_n^k}{s_n \beta_n^k} = 1 + \lim_{n \to \infty} \frac{t_n \alpha_n^k}{s_n \beta_n^k}=1.
\end{align*}
Furthermore, we have that
\begin{align*}
\lim_{n \to \infty} \frac{C_\alpha(g(n))}{P_{g(n)}^{(k)}} &= \lim_{n \to \infty} \frac{\pr{\sum_{j=1}^{i(n)-1} \upsilon_j } + c(n)}{\pr{\sum_{j=1}^{i(n)-1} \frac{s_j \upsilon_j -k}{\alpha_j^k} } + \frac{c(n) s_{i(n)} - k}{\alpha_{i(n)}^k}   } \\
&= \lim_{n \to \infty} \frac{\alpha_n^k }{s_n - k/\upsilon_n} = \lim_{n \to \infty} \frac{\alpha_n^k}{s_n} = 0.
\end{align*}
Since $\xi \in \N{P}$, we have that
$$\lim_{n \to \infty} \frac{N_n^Q(B,x)}{\qnk} = \lim_{n \to \infty} \frac{N_n^Q(B,x)}{P_{g(n)}^{(k)}} =1.$$
Therefore, $x \in \N{Q}$.

For $n$ where $q_n = \beta_{i(n)}$, we have 
\begin{equation}\labeq{whereblahgoestozero}
\frac{E_n}{q_n} \leq \frac{\beta_{i(n)}^{1-\log^{-1/2} \beta_{i(n)}}}{\beta_{i(n)}} \to 0 \hbox{ as } n \to \infty.
\end{equation}
Up to position $n$ there are at least $\sum_{j=1}^{i(n)} \upsilon_i t_i + c(n) t_{i(n)}$ such places where \refeq{whereblahgoestozero} holds. By \refeq{DNQ}, we have
$$
\lim_{n \to \infty} \frac{\sum_{j=1}^{i(n)} \upsilon_i t_i + c(n) t_{i(n)}}{n} = 1
$$
so the sequence $\pr{\frac{E_n}{q_n}}$ is not uniformly distributed mod 1. Thus $x \notin \DN{Q}$ and $\Theta(\alpha, \beta, s, t, \upsilon, F, I) \subseteq \N{Q} \backslash \DN{Q}$, which implies that $\dimh{\N{Q} \backslash \DN{Q}} =1.$
%Then since $\xi \in \RN{P}$ (see \reff{figure1}),
%\begin{align*}
%\lim_{n \to \infty} \frac {N_n^Q(B_1,x)}{N_n^Q(B_2,x)}=\lim_{n \to \infty} \frac {N_{g(n)}^P(B_1,\xi)+O(1)}%{N_{g(n)}^P(B_2,\xi)+O(1)}=1.
%=\frac {N_{g(n)}^P(B_1,\xi)+O(1)} {P_{g(n)}^{(k)}} \cdot \frac {P_{g(n)}^{(k)}} {N_{g(n)}^P(B_2,\xi)+O(1)} 
%\end{align*}
\end{proof}

\begin{thrm}\labt{NDNk}
Suppose that $Q=Q(\alpha, \beta, s, t, \upsilon)$ is infinite in limit, fully divergent, and satisfies conditions \refeq{HDT1}, \refeq{HDT2}, \refeq{NQ} for $k\geq \ell$, \refeq{NotNQ} for $\ell < k$, and \refeq{RNQ}. Then 
$$
\dimh{\bigcap_{j=\ell}^\infty \Nk{Q}{j} \Big \backslash \bigcup_{j=1}^{\ell-1} \Nk{Q}{j}}=1.
$$
\end{thrm}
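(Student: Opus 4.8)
The plan is to run the construction from the proof of \reft{NDN} almost verbatim, changing only the bookkeeping that separates the orders $k \geq \ell$ (where \refeq{NQ} holds) from the orders $k < \ell$ (where \refeq{NotNQ} holds). First I would set
$$
P=[\alpha_1]^{s_1 \upsilon_1}[\alpha_2]^{s_2 \upsilon_2}[\alpha_3]^{s_3\upsilon_3}\cdots,
$$
the basic sequence obtained from $Q$ by deleting all the $\beta$-blocks. Before applying \reft{measure} I must check that $P$ is fully divergent. For $k \geq \ell$, \refeq{NQ} forces the $\beta$-contribution $t_j \upsilon_j/\beta_j^k$ to $\qnk$ to be dominated by the $\alpha$-contribution $s_j \upsilon_j/\alpha_j^k$, so the $k$-divergence of $Q$ descends to $k$-divergence of $P$. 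In particular $P$ is $\ell$-divergent; and since every term of $P_n^{(k)}$ is at most half the corresponding term of $P_n^{(k-1)}$, divergence of order $\ell$ propagates down to every order $k < \ell$. Hence $P$ is fully divergent, and \reft{measure} produces a number $\xi=E_0.E_1E_2\cdots\wrt{P}$ in $\N{P}$. I would then take $I_i = \br{\alpha_i, \alpha_i+1, \cdots, \floor{\beta_i^{1-(1/\log \beta_i)^{1/2}}}+1}$ and $F_i = E_i$ exactly as in \reft{NDN}, so that $\lim_n \log|I_n|/\log \beta_n = 1$ and therefore $\dimh{\Theta(\alpha, \beta, s, t, \upsilon, F, I)}=1$ by \refl{HDT} (using \refeq{HDT1} and \refeq{HDT2}). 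It then suffices to prove the containment
$$
\Theta(\alpha, \beta, s, t, \upsilon, F, I) \subseteq \bigcap_{j=\ell}^\infty \Nk{Q}{j} \Big\backslash \bigcup_{j=1}^{\ell-1} \Nk{Q}{j}.
$$

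Fix $x \in \Theta(\alpha, \beta, s, t, \upsilon, F, I)$, a block $B$ of length $k$, and let $m$ be the maximum digit in $B$. As in \reft{NDN}, $\min I_i = \alpha_i \to \infty$, so for all large $i$ the digits placed at $\beta$-positions exceed $m$; hence $B$ can occur only at the $\alpha$-positions apart from finitely many boundary exceptions, giving the same sandwich
$$
N_{g(n)}^P(B, \xi) - k C_\alpha(g(n)) \leq N_n^Q(B,x) \leq N_{g(n)}^P (B, \xi) + O(1).
$$
The computations from \reft{NDN}, all via \refl{tcorr}, then give $\lim_n C_\alpha(g(n))/P_{g(n)}^{(k)} = 0$ from \refeq{RNQ}, together with
$$
\lim_{n \to \infty} \frac{\qnk}{P_{g(n)}^{(k)}} = 1 + \lim_{n \to \infty} \frac{t_n \alpha_n^k}{s_n \beta_n^k}.
$$

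Since $\xi \in \N{P}$ we have $N_{g(n)}^P(B,\xi)/P_{g(n)}^{(k)} \to 1$, so dividing the sandwich by $P_{g(n)}^{(k)}$ and then by the displayed ratio yields
$$
\lim_{n \to \infty} \frac{N_n^Q(B,x)}{\qnk} = \pr{1 + \lim_{n \to \infty} \frac{t_n \alpha_n^k}{s_n \beta_n^k}}^{-1}
$$
for every block $B$ of length $k$. Reading off the two cases finishes the argument: for $k \geq \ell$, \refeq{NQ} makes the bracketed limit equal $1$, so the ratio tends to $1$ and $x \in \Nk{Q}{k}$, whence $x \in \bigcap_{j=\ell}^\infty \Nk{Q}{j}$; for each $k < \ell$, \refeq{NotNQ} makes the bracketed limit $1+c$ with $c>0$, so the ratio tends to $(1+c)^{-1} \neq 1$ for every block of length $k$, whence $x \notin \Nk{Q}{k}$ and thus $x \notin \bigcup_{j=1}^{\ell-1} \Nk{Q}{j}$.

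The only genuinely new point compared with \reft{NDN} is verifying the full divergence of $P$ at the orders $k < \ell$ where \refeq{NQ} is unavailable, which I handle by the monotonicity of divergence in the order. Conceptually the mechanism is transparent: the $\beta$-blocks contribute extra mass to $\qnk$ but contribute nothing asymptotically to $N_n^Q(B,x)$ (their digits are eventually too large to appear in $B$), and \refeq{NotNQ} is precisely the condition that this extra mass is non-negligible, deflating the normalized block frequency below $1$ and destroying $Q$-normality of order $k$ exactly for $k < \ell$.
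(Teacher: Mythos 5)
Your proposal is correct and follows essentially the same route as the paper: the same auxiliary sequence $P$, the same sets $I$ and $F$, the same sandwich for $N_n^Q(B,x)$, and the same comparison of $\qnk$ with $P_{g(n)}^{(k)}$ via conditions \refeq{NQ} and \refeq{NotNQ}. In fact you supply two details the paper's very terse proof leaves implicit --- the full divergence of $P$ at orders $k<\ell$ (via monotonicity of $k$-divergence in $k$) and the explicit verification that $\lim_n N_n^Q(B,x)/\qnk = (1+c)^{-1}\neq 1$ kills $Q$-normality of every order $k<\ell$ --- both of which are handled correctly.
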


\begin{proof}
Define the same basic sequence $P$ and sequences $I$ and $F$ as in the proof of \reft{NDN}.
The  same arguments regarding the asymptotics of $N_n^Q(B,x)$ for $x \in \Theta(\alpha, \beta, s, t, F, I)$ hold,  so
$$
\lim_{n \to \infty} \frac{N_{n}^Q(B,x)}{P_{g(n)}^{(k)}} = 1.
$$
But since \refeq{NQ} holds for $k \geq \ell$, we have that
$$
\lim_{n \to \infty} \frac{Q_n^{(k)}}{P_{g(n)}^{(k)}} = 1 + \lim_{n \to \infty} \frac{t_n \alpha_n^k}{s_n \beta_n^k} = 1.
$$
Thus $x$ is $Q$-normal of orders greater than or equal to $\ell$.
\end{proof}

\begin{example}
Set $\alpha_n =  \floor{\log \log(n+2)}+2$, $\beta_n = \floor{\log n}+2$, $s_n = \floor{\log n}$, $t_n = n$, and $\upsilon_n = 2^n$. Then the conditions of \reft{NDN} are satisfied.
\end{example}

\begin{example}
Fix some integer $\ell$. Set $\alpha_n = \floor{\log \log( n+2)}+2$, $\beta_n = \floor{\log n}+2$, $s_n = \floor{\log n}$, $t_n = \floor{\pr{\frac{\beta_n}{\alpha_n}}^{\ell+1} s_n}$, and $\upsilon_n = 2^n$. Then  the conditions of \reft{NDNk} are satisfied.
\end{example}

\begin{proof}[Proof of \reft{digitrange}]
Let $\gamma=\dimm{S}$, $\alpha_i = 2$, $\beta_i = q_i$, $s_i = 0$, $t_i = 1$, $\upsilon_i = 1$, $F_i = 0$, and 
$$
I_i = S \cap \{0, \cdots, q_i -2 \}.
$$ 
Then \refeq{HDT1} and \refeq{HDT2} clearly hold. Note that 
$$
\WQS \subseteq \Theta(\alpha, \beta, s, t, \upsilon, F, I),
$$ 
 so $\dimh{\WQS} \leq \gamma$.

To get a lower bound, we construct a subset of $\WQS$ with Hausdorff dimension $\gamma$. To do this, let $T \subset \mathbb{N}$ be an infinite set that is sparse enough such that 
$$
\lim_{k \to \infty} \frac{\sum_{i=1}^k \chi_T(i) \log \# (S \cap \{0, \cdots , q_i-2 \})}{\sum_{i=1}^k \log \# (S \cap \{0, \cdots, q_i -2\})} = 0.
$$
Note that such a $T$ exists since $\lim_{k\to \infty} \sum_{i=1}^k \log \# (S \cap \{0, \cdots, q_i-2\}) = \infty$.

Let $f: T \to S$ be a surjective function such that for all $t \in T$, we have $q_t > f(t)$. Such an $f$ exists since $\min S < \min Q$, $T$ is infinite, and $Q$ is infinite in limit. Consider the homogeneous Moran set $C$ with 
$$
n_k =
\begin{cases}
1 & \text{ if } k \in T\\
\# S \cap \{0, \cdots, q_k -2 \} & \text{ else}
\end{cases}
$$
and $c_k = \frac{1}{q_k}$ described as follows:
If $k \in T$, then for any $x \in C$, $E_k(x) = f(k)$. Otherwise, $E_k(x) \in S \cap \{0, \cdots, q_k-2 \}$.
Since $f$ is surjective, we have that for any $x \in C$ that $\SQx = S$,  so $C \subseteq \WQS$.
But 
\begin{align*}
\dimh{C} &\geq \liminf_{k \to \infty} \frac{\log n_1 \cdots n_k}{- \log c_1 \cdots c_{k+1} n_{k+1}} \\
& = \lim_{k \to \infty} \frac{\sum_{i=1}^k \chi_{\mathbb{N} \backslash T}(i) \log\#( S \cap \{0, \cdots, q_i-2\})}{\sum_{i=1}^k \log q_i + \log q_{k+1}}\\
& = \lim_{k \to \infty} \frac{\sum_{i=1}^k \log\# (S \cap \{0, \cdots, q_i-2\})}{\sum_{i=1}^k \log q_i}\\
& = \lim_{k \to \infty} \frac{\log\# (S \cap \{0, \cdots, q_k-2\})}{\log q_k} = \gamma.
\end{align*}
Thus $\dimh{\WQS} \geq \gamma$,  so we have $\dimh{\WQS} = \gamma$.

\end{proof}

\section{Further problems}
\begin{problem}
For which irrational $x$ does there exist a basic sequence $Q$ where $x \in \RDN$.  The same question may be asked about several of the other sets discussed in this paper.  We remark that it is already known that for every irrational $x$ there exist uncountably many basic sequences $Q$ where $x \in \DNQ$.  See \cite{Laffer}.
\end{problem}

\begin{problem}
Prove that the conclusions of \reft{NDN} and \reft{NDNk} hold for all $Q$ that are infinite in limit and fully divergent.
\end{problem}

\begin{problem}
In \cite{Mance7} sufficient conditions are given under countable  intersections of sets of the form $\DNQ \backslash \bigcup_{j=1}^\infty \RNk{Q}{j}$ have full Hausdorff dimension.  Surely a similar result holds for many of the sets described in this paper.  Necessary and sufficient conditions similar to conditions found in the paper of W. M. Schmidt \cite{SchmidtRelated} may be possible.
\end{problem}

\bibliographystyle{amsplain}

%\bibliography{mance} 
\providecommand{\bysame}{\leavevmode\hbox to3em{\hrulefill}\thinspace}
\providecommand{\MR}{\relax\ifhmode\unskip\space\fi MR }
% \MRhref is called by the amsart/book/proc definition of \MR.
\providecommand{\MRhref}[2]{%
  \href{http://www.ams.org/mathscinet-getitem?mr=#1}{#2}
}
\providecommand{\href}[2]{#2}

\end{document}